\newcommand{\be}{\begin{equation}}
\newcommand{\ee}{\end{equation}}
\numberwithin{equation}{section}
\theoremstyle{plain}
\newtheorem{theorem}{Theorem}[section]
\newtheorem{corollary}[theorem]{Corollary}
\newtheorem{prop}[theorem]{Proposition}
\newtheorem{lemma}[theorem]{Lemma}
\theoremstyle{remark}
\theoremstyle{definition}
\newtheorem{definition}[theorem]{Definition}
\newcommand{\e}{\varepsilon}
\newcommand{\R}{\mathbb{R}}
\newcommand{\dist}{\mathrm{dist}}
\newcommand{\cN}{\mathcal{N}}
\newcommand{\cD}{\mathcal{D}}
\newcommand{\cR}{\mathcal{R}}
\DeclareMathOperator{\hdim}{dim_H}
\DeclareMathOperator{\pdim}{dim_P}
\DeclareMathOperator{\ubdim}{\overline{\dim}_B}
\DeclareMathOperator{\supp}{supp}
\newcommand{\wt}{\widetilde}
\newcommand{\al}{\alpha}
\newcommand{\de}{\delta}
\newcommand{\des}{\delta^s}
\newcommand{\dess}{\delta^{1-s}}
\newcommand{\om}{\omega}
\newcommand{\Om}{\Omega}
\newcommand{\lam}{\lambda}
\newcommand{\Lam}{\Lambda}
\title{On the packing dimension of Furstenberg sets}
\author{Pablo Shmerkin}
\address{Department of Mathematics and Statistics, Torcuato Di Tella University, and CONICET, Buenos Aires, Argentina}
\email{pshmerkin@utdt.edu}
\urladdr{http://www.utdt.edu/profesores/pshmerkin}
\thanks{P.S. was partially supported by Project PICT 2015-3675 (ANPCyT)}
\subjclass[2010]{Primary: 28A78, 28A80}
\keywords{Furstenberg sets, packing dimension, Hausdorff dimension}
\begin{document}

\begin{abstract}
We prove that if $\alpha\in (0,1/2]$, then the packing dimension of a set $E\subset\mathbb{R}^2$ for which there exists a set of lines of dimension $1$ intersecting $E$ in Hausdorff dimension $\ge \alpha$ is at least $1/2+\alpha+c(\alpha)$ for some $c(\alpha)>0$. In particular, this holds for $\alpha$-Furstenberg sets, that is, sets having intersection of Hausdorff dimension $\ge\alpha$ with at least one line in every direction. Together with an earlier result of T. Orponen, this provides an improvement for the packing dimension of $\alpha$-Furstenberg sets over the ``trivial'' estimate for all values of $\alpha\in (0,1)$. The proof extends to more general families of lines, and shows that the scales at which an $\alpha$-Furstenberg set resembles a set of dimension close to $1/2+\alpha$, if they exist, are rather sparse.
\end{abstract}

\maketitle

\section{Introduction and results}

\subsection{An improved lower bound on the packing dimension of Furstenberg sets}

In this paper we are concerned with Furstenberg sets and their variants. Let $\hdim$ denote Hausdorff dimension.
\begin{definition}
A set $E\subset\R^2$ is an $\alpha$-Furstenberg set if there exists a set $\Theta\subset S^1$ of positive measure, such that for all $\theta\in\Theta$ there is a line $\ell_\theta$ in direction $\theta$ such that
\[
\hdim(E\cap \ell_\theta) \ge \alpha.
\]
\end{definition}

An old problem motivated by work of Furstenberg \cite{Furstenberg70}, and first stated in print by Wolff \cite{Wolff99}, asks what is the smallest possible Hausdorff dimension of an $\alpha$-Furstenberg set. Wolff's paper contains a proof that if $E$ is an $\alpha$-Furstenberg set with $\alpha\in (0,1]$, then
\be \label{eq:easy-lower-bound}
\hdim(E) \ge \max(2\alpha,\alpha+1/2).
\ee
Note that the estimate is $\alpha+1/2$ is better for $\alpha\in (0,1/2)$ and both estimates coincide at $\alpha=1/2$. On the other hand, Wolff constructed an $\alpha$-Furstenberg set of Hausdorff dimension $\tfrac{1}{2}+\tfrac{3}{2}\al$. Thus \eqref{eq:easy-lower-bound} is sharp for $\al=1$, but it might not be otherwise.

It turned out to be quite hard to improve upon \eqref{eq:easy-lower-bound}. Katz and Tao \cite{KatzTao01} related the problem of improving the bound $\hdim(E)\ge 1$ for $1/2$-Furstenberg sets with other (at the time) outstanding problems in geometric measure theory. Shortly after, Bourgain \cite{Bourgain03} made dramatic progress on one of these related problems, the discretized sum-product problem. Combining the results of \cite{KatzTao01} and \cite{Bourgain03} it follows that $\hdim(E)\ge 1+c$ for all $1/2$-Furstenberg sets, where $c>0$ is a small universal constant. In fact, the proof shows that the same holds for $\alpha$-Furstenberg sets if $\alpha$ is close enough to $1/2$ (possibly smaller). This remains the only improvement to \eqref{eq:easy-lower-bound} to date.

In \cite{Orponen20}, however, Orponen achieved an improvement over \eqref{eq:easy-lower-bound} in the range $\alpha\in (1/2,1)$, with the caveat that he considered the \emph{packing} dimension $\pdim(E)$ of $E$, rather than Hausdorff dimension. That is, he proved that for each $\alpha\in (1/2,1)$ there is a small number $c(\alpha)>0$ such that
\[
\pdim(E) \ge 2\alpha+c
\]
for all $\alpha$-Furstenberg sets $E$. Since (upper) box-counting dimension is at least as large as packing dimension, this estimate holds for it as well. See \cite[Chapters 2 and 3]{Falconer14} for an introduction to the notions of dimension considered in this paper.

Orponen's result left open the question of whether such an improvement is also possible, at least for packing dimension, also in the range $\alpha\in (0,1/2)$.  Our main result provides such an improvement:

\begin{theorem} \label{thm:main}
For $\alpha \in (0,1/2]$ there exists a constant $c=c(\alpha)>0$ such that
\[
\pdim(E) \ge \frac{1}{2}+\alpha +c
\]
for all $\alpha$-Furstenberg sets $E\subset\R^2$.
\end{theorem}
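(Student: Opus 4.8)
The plan is to work entirely in a $\de$-discretized setting and to exploit the feature that distinguishes packing dimension from Hausdorff dimension: it is governed by a \emph{limsup} of covering numbers across scales and is countably stable. First I would fix an $\al$-Furstenberg set $E$ and, applying Frostman's lemma to each fibre $E\cap\ell_\theta$, build measures $\nu_\theta$ with $\nu_\theta(B(x,r))\lesssim r^\al$ on the fibres; integrating these against the direction parameter over $\Theta$ produces a single measure $\mu$ carried by $E$. The purpose of $\mu$ is to convert the qualitative hypothesis ``$\hdim(E\cap\ell_\theta)\ge\al$ on a positive-measure set of $\theta$'' into quantitative information, valid at \emph{every} small scale $\de$, about how many $\de$-tubes in $\sim\de^{-1}$ distinct directions carry $\gtrsim\de^{-\al}$ of the $\de$-mass of $E$.

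Next I would isolate the new content as a statement about when covering numbers can be extremal. Call a scale $\de$ \emph{bad} if $N(E,\de)$ lies within a factor $\de^{-\e}$ of the Wolff value $\de^{-(1/2+\al)}$; since \eqref{eq:easy-lower-bound} already gives $N(E,\de)\gtrsim\de^{-(1/2+\al)}$ at every scale, ``bad'' means ``as small as the Wolff bound allows''. The heart of the argument is to show that bad scales cannot cluster. If $E$ were extremal simultaneously at a scale $\de$ and at a comparably finer scale (a logarithmic window below $\de$), then rescaling a positive fraction of the $\de$-balls to unit size would again expose extremal Furstenberg-type configurations of the \emph{same} parameter $\al$. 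Iterating this renormalisation forces the slices and the direction set to assemble, at every intermediate scale, into an essentially rigid product structure, which is what I would then contradict.

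The engine of the contradiction, and the main obstacle, is a quantitative entropy-increase estimate. An extremal configuration would pin the dimension of $\mu$ under a positive-measure family of linear projections (directions) exactly at the critical value, leaving no slack; this is incompatible with a discretized projection / sum–product theorem in the spirit of Bourgain, which forces a measure that is not already multiscale-concentrated to gain a definite amount of dimension under a typical projection. Making this incompatibility uniform across the renormalisation scales, and tracking how the gain $c$ degrades as $\al\to 0$ and as the number of iterations grows, is where the genuine work lies. It is precisely here that the packing-dimension target helps: unlike the single-scale Hausdorff problem (which remains open), one only needs an improvement at a \emph{sparse} set of scales, so the rigid structure need only be ruled out across short windows rather than at every scale simultaneously.

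Finally I would assemble the pieces. The clustering estimate shows that the bad scales form a set of zero logarithmic density, so along a full-density sequence of scales one has $N(E,\de)\ge\de^{-(1/2+\al+c)}$. Feeding this into a measure-theoretic characterisation of packing dimension — one that reduces a lower bound for $\pdim(E)$ to covering-number lower bounds for $\mu$ along a sequence of scales, and that survives passing to the pieces of an arbitrary countable decomposition of $E$ (on which the Furstenberg property persists for a positive-measure set of directions) — then yields $\pdim(E)\ge \tfrac12+\al+c$, as required.
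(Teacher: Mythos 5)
Your high-level architecture matches the paper's: a two-scale dichotomy (if the covering number is extremal at a coarse scale it must beat the trivial bound at a suitable finer scale, so ``bad'' scales are sparse), Bourgain's discretized projection theorem as the engine of the gain, and countable stability plus the standard reduction from packing to upper box dimension at the end. But the core of the argument --- the precise mechanism by which extremality at the coarse scale produces a configuration to which Bourgain's theorem applies, and by which the projection gain is converted back into a covering-number gain for $E$ --- is missing, and what you substitute for it does not work as stated.

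Concretely: (a) You invoke an ``iterated renormalisation'' forcing ``an essentially rigid product structure at every intermediate scale.'' The actual proof uses exactly two scales, $\de$ and $\des$ with $s\in[1/2,1)$, and the structure extracted from the counter-assumption $\cN_{\des}(E)\le(\des)^{-\al-\beta/2-\eta}$ is not a product structure of $E$; it is a clustering statement about the \emph{lines}: via the Wolff-type Lemma \ref{lem:alpha+gamma bound}, the line parameters must concentrate into $\approx(\des)^{-\beta}$ balls $\Om_\lam$ of radius $\des$, each containing $\gtrsim(\des)^{\beta+2\eta}|\Om|$ lines, with the union $F_\lam$ of the corresponding fibres having projection of covering number only $(\des)^{-\al-2\eta}$ onto $L_\lam$. (b) You propose to apply the projection theorem to the measure $\mu$ on $E$; the paper applies it instead to the \emph{rescaled set of line parameters} $X_\lam=\de^{-s}(\Om_\lam-\lam)\subset B^2$ (which inherits a $\beta$-Frostman condition), with the $\al$-Frostman measure $\wt\rho_\lam$ playing the role of the measure on directions via $\pi_t(a,b)=at+b$. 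The decisive and entirely absent step is Lemma \ref{lem:box-counting-from-projection}: inside a $\des$-square $Q_j$ the lines of $\Om_\lam$ are within angle $2\des$ of each other, so (using $s\ge 1/2$, i.e.\ $\des\cdot\des\le\de$) segments $L_\om\cap Q_j$ with $\de$-separated $\pi_t$-images are themselves $\de$-separated, whence $\cN_\de(E\cap Q_j)\gtrsim(\dess)^{-\al}\cN_\de(\pi_t\Om_\lam^{(j)})$. Without this bridge, ``Bourgain forces a dimension gain'' does not yield any lower bound on $\cN_\de(E)$. (c) The final assembly also needs a double count of the rich $\des$-squares with a multiplicity bound (Lemma \ref{lem:fan}: at most $(\des)^{-\beta/2-\eta}$ of the tubes $F_\lam$ pass through a given square, again using the counter-assumption), which your sketch omits. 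Finally, note that for the packing/upper box target you only need the improved bound at \emph{arbitrarily small} scales, not along a full-density set; your stronger ``zero logarithmic density of bad scales'' claim is what the paper's dichotomy delivers as a corollary, not an input you can assume.
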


We can in fact prove a more general result. In \cite{MolterRela12}, Molter and Rela generalized the notion of Furstenberg set by considering also fractal sets of directions. In several later papers such as \cite{HKM19, HSY20} this was expanded further to arbitrary families of lines (not necessarily pointing in different directions). Note that the Grassmanian $A(2,1)$ of affine lines in the plane is a $2$-dimensional manifold, so one can speak of the Hausdorff dimension of an arbitrary set of lines.  We say that $E\subset\R^2$ is an $(\alpha,\beta)$-Furstenberg set if
\[
\hdim\{ \ell\in A(2,1):\hdim(E\cap \ell) \ge \alpha\}\ge \beta.
\]
Clearly, a classical $\alpha$-Furstenberg set is an $(\alpha,1)$-Furstenberg set. Let $s(\al,\beta)$ be the infimum of the Hausdorff dimensions of $(\al,\beta)$-Furstenberg sets. In the range $\al\in (0,1/2]$, the following bounds are known:
\[
s(\al,\beta) \ge \left\{
\begin{array}{cccl}
  \al+\min(\al,\beta) & \text{if} & \beta \le 2\alpha & \text{(\cite{LutzStull17, HSY20})} \\
  2\al+ c(\al) & \text{if} & \beta \in (2\al-c(\al),2\al+c(\al)) & \text{(\cite{HSY20})} \\
  \al + \beta/2 & \text{if} & \beta\in (2\al, 1] & \text{(\cite{MolterRela12,Hera19})} \\
\end{array}
\right..
\]
We are able to improve upon the $\al+\beta/2$ bound in the range $\beta>2\al$, with the caveat that we consider the packing (or upper box-counting) dimension of the $(\al,\beta)$-Furstenberg set. Let $\cN_\de(A)$ denote the smallest number of balls of radius $\de$ needed to cover $A\subset\R^d$.
\begin{theorem} \label{thm:main-alpha-beta}
Given $\alpha \in (0,1/2]$ and $\beta \in (2\alpha,2]$ there is $c=c(\alpha,\beta)>0$ (depending continuously on the parameters) such that the following holds.

Let $E$ be an $(\alpha,\beta)$-Furstenberg set and fix $s\in [1/2,1)$. Then for all small enough $\de$ (depending on $E$ and $s$), either
\be  \label{eq:main-scale-dichotomy}
\cN_\de(E) \ge \de^{-\alpha-\beta/2-(1-s)c} \quad\text{or}\quad \cN_{\des}(E) \ge (\des)^{-\alpha-\beta/2-(1-s)c}.
\ee
\end{theorem}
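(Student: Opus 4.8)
The plan is to localise the multiscale mechanism behind the bound $\hdim(E)\ge\al+\beta/2$ to the two scales $\de$ and $\des$, and to extract a quantitative gain from a non-trivial projection/sum-product input that the classical $L^2$-based proof of $\al+\beta/2$ does not exploit. First I would discretise the Furstenberg hypothesis. Applying Frostman's lemma to the line set $\{\ell:\hdim(E\cap\ell)\ge\al\}$ and to each rich slice $E\cap\ell$ produces measures of exponents $\beta-\e$ and $\al-\e$; the key point is that a Frostman measure yields a \emph{lower} covering bound at \emph{every} scale, so for all $\rho\in[\de,1]$ one simultaneously has $\cN_\rho$ of the line family $\gtrsim\rho^{-\beta+\e}$ and, for each rich line, $\cN_\rho(E\cap\ell)\gtrsim\rho^{-\al+\e}$. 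After the usual pigeonholing (uniformising multiplicities and branching numbers over the $\approx\log(1/\de)$ dyadic scales in $[\de,1]$) I obtain at scale $\de$ a $\beta$-dimensional family $\cT$ of $\de$-tubes, each meeting $\gtrsim\de^{-\al}$ of the $\de$-balls covering $E$, together with the analogous coarse data at scale $\des$.

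The organising identity is the factorisation of covering numbers across the two scales,
\[
\cN_\de(E)=\cN_{\des}(E)\cdot\ol{\cN},\qquad \ol{\cN}:=\frac{\cN_\de(E)}{\cN_{\des}(E)},
\]
where $\ol{\cN}$ is the average over $\des$-balls $Q$ meeting $E$ of $\cN_\de(E\cap Q)$; after rescaling $Q$ to the unit ball this records the covering number of $E$ at the \emph{relative} scale $\dess$. The Furstenberg structure descends to both factors: thickening $\cT$ gives a coarse $(\al_c,\beta)$-configuration, and inside a typical $Q$ the rescaled slices furnish a fine $(\al_f,\beta)$-configuration, with the richness splitting $\al=s\,\al_c+(1-s)\,\al_f$. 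Feeding the baseline bound $\al_\bullet+\beta/2$ into each factor merely reconstitutes $\al+\beta/2$, so \emph{no} improvement can come from the energy argument alone; this is precisely the obstruction the two-scale formulation is designed to bypass.

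The gain must therefore come from an additive-combinatorial input. I would use a discretised projection theorem of Bourgain's type (replacing, in the range $\al\le1/2$, Orponen's use of positive-measure projections available only for $\al>1/2$), or equivalently the author's $L^q$-dimension / inverse-theorem machinery: a point set that is $(\rho,\kappa)$-non-concentrated at the intermediate scales has projections beating the trivial exponent by a definite $c=c(\al,\beta)>0$, and this transfers to a gain of $c$ in the relevant covering factor. The two-scale dichotomy supplies the required non-concentration through a case split on the fine configuration. If inside a positive proportion of the $\des$-balls $Q$ the rescaled fine configuration is concentrated --- trapped in few sub-tubes at many relative scales --- then, because a $\beta$-dimensional Frostman family of rich lines cannot be sustained by such a degenerate fine structure, the missing dimension must be carried at the coarse scale, forcing $\cN_{\des}(E)\ge(\des)^{-\al-\beta/2-(1-s)c}$, i.e.\ the second alternative. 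Otherwise the fine configuration is non-concentrated, the projection input gives $\ol{\cN}\ge(\dess)^{-\al-\beta/2-c}$, and combining this with the baseline coarse bound $\cN_{\des}(E)\gtrsim(\des)^{-\al-\beta/2}$ through the factorisation yields $\cN_\de(E)\ge\de^{-\al-\beta/2-(1-s)c}$, the first alternative.

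The hard part will be making the additive-combinatorial gain effective and uniform. One must (i) isolate a non-concentration notion that is simultaneously \emph{forced} by the failure of coarse extremality and strong enough to \emph{trigger} the projection/inverse theorem with a quantitative, parameter-continuous constant $c(\al,\beta)$; (ii) ensure the gain survives the passage through $\approx\log(1/\de)$ scales without being swallowed by the accumulated $\de^{o(1)}$ losses from the pigeonholing and uniformisation; and (iii) keep track of the regime $\beta>2\al$, in which $\al+\beta/2$ (rather than $2\al$) is the quantity being improved, so that the dichotomy is non-vacuous and $c$ degenerates continuously as $\beta\downarrow 2\al$. Granting these, fixing any $s\in[1/2,1)$ and assembling the two cases gives \eqref{eq:main-scale-dichotomy} for all $\de$ below a threshold determined by the scale at which the Frostman data for $E$ become quantitatively visible.
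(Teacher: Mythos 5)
Your high-level strategy---discretize, factor the covering number across the scales $\de$ and $\des$, and extract the gain from Bourgain's discretized projection theorem---matches the paper's. But the load-bearing step of your argument, the case split on whether the \emph{fine} configuration is concentrated, is where the proposal has a genuine gap. In the concentrated case you assert that ``the missing dimension must be carried at the coarse scale, forcing $\cN_{\des}(E)\ge(\des)^{-\al-\beta/2-(1-s)c}$.'' This is an improvement by a definite power over the baseline $\al+\beta/2$, and no mechanism for it is given: the baseline at scale $\des$ comes from an incidence/$L^2$ count, and degeneracy of the fine structure inside $\des$-squares does not by itself feed back into that count. The paper runs the implication in the opposite direction: it \emph{assumes} $\cN_{\des}(E)\le(\des)^{-\al-\beta/2-\eta}$ and shows this forces near-maximal clustering of the lines into $\des$-bundles $\Om_\lam$ with $|\Om_\lam|\gtrsim(\des)^{\beta+2\eta}|\Om|$ (Lemmas \ref{lem:upper-bound-Lambda}--\ref{lem:many-rich-deh-tubes}) and near-minimal spread of $E$ inside each $\des$-tube (Lemma \ref{lem:deh-rectangles-small-projection}). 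Those two structural facts are exactly what make Bourgain's theorem applicable; they are not equivalent to a non-concentration property of $E\cap Q$ that one could pivot on directly, and your version would additionally have to handle configurations that are concentrated at some intermediate scales and not others, since Bourgain's hypothesis is a Frostman bound at \emph{all} scales.

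Relatedly, the proposal never pins down what is projected and in which directions, and this is where the actual work lies. In the paper the theorem is applied not to the point set $E\cap Q$ but to the rescaled set of line parameters $X_\lam=\de^{-s}(\Om_\lam-\lam)$, with the ``direction'' of projection being the position $t$ along the tube (the maps $\pi_t(a,b)=at+b$), the $t$-measure being $\wt{\rho}_\lam$ with Frostman exponent $\al$, and the non-concentration of $X_\lam$ coming from the global Frostman condition \eqref{eq:Om-Frostman} \emph{combined with} the counter-assumption (which guarantees $|\Om_\lam|$ is large enough for the renormalized bound $|X_\lam\cap B_r|\lesssim\de^{-2\eta s}r^\beta|X_\lam|$ to hold). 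The hypothesis $s\ge 1/2$ enters precisely here, to make the lines of $\Om_\lam$ parallel to within $\de$ inside a $\des$-square so that $\cN_\de(\pi_t\Om_\lam^{(j)})$ lower-bounds the number of $\de$-separated segments; your sketch does not use $s\ge 1/2$ anywhere. Finally, your identity $\cN_\de(E)=\cN_{\des}(E)\cdot\ol{\cN}$ with $\ol{\cN}$ an average over all $\des$-squares requires knowing that the squares carrying the fine gain form a positive proportion of all squares meeting $E$; the paper obtains this by a double count of rich squares whose multiplicity is controlled by Lemma \ref{lem:fan} together with (again) the counter-assumption. None of these steps is fatal to the overall plan, but as written the concentrated branch of your dichotomy is unproved and the non-concentrated branch does not yet identify the objects to which the projection theorem applies.
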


We make a few remarks on this statement. Firstly, by taking $s=1/2$, it follows immediately that $\ubdim(E)\ge \al+\beta/2+c'(\al,\beta)$, with $c'(\al,\beta)=c(\al,\beta)/2$. By a standard argument (see \cite[Section 2]{Orponen20}), the same is true for packing dimension:
\begin{corollary} \label{cor:main-alpha-beta}
In the context of Theorem \ref{thm:main-alpha-beta},
\[
\ubdim(E) \ge \pdim(E) \ge \alpha+\frac{\beta}{2}+c'(\al,\beta).
\]
\end{corollary}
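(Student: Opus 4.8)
The inequality $\ubdim(E) \ge \pdim(E)$ holds for every bounded set and needs no hypothesis, so all the content lies in the lower bound $\pdim(E) \ge \al+\beta/2+c'(\al,\beta)$. The plan is to reduce this packing-dimension estimate to the upper box-counting consequence of Theorem~\ref{thm:main-alpha-beta} by invoking the standard characterization of packing dimension as a countably stabilized (``modified'') upper box dimension,
\[
\pdim(E) = \inf\Big\{ \sup_i \ubdim(E_i) : E \subseteq \textstyle\bigcup_{i=1}^\infty E_i \Big\},
\]
(see \cite[Section 2]{Orponen20}). It therefore suffices to show that in \emph{every} countable cover $E=\bigcup_i E_i$ some single piece $E_{i_0}$ already satisfies $\ubdim(E_{i_0}) \ge \al+\beta/2+c'$.

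First I would record the box-counting bound for a genuine $(\al,\beta)$-Furstenberg set, which is exactly the $s=1/2$ case noted after the statement. Taking $s=1/2$ in Theorem~\ref{thm:main-alpha-beta} (so $(1-s)c=c/2$), for every small $\de$ the dichotomy \eqref{eq:main-scale-dichotomy} furnishes a scale $\rho=\rho(\de)\in\{\de,\deh\}$ with $\cN_\rho(E)\ge \rho^{-(\al+\beta/2+c/2)}$. Since $\rho(\de)\to 0$ as $\de\to 0$, passing to the $\limsup$ gives $\ubdim(E)\ge \al+\beta/2+c/2$. The essential point here is that the theorem only pins down the exponent at \emph{some} scale depending on $\de$, which is why one recovers the upper box (limsup) dimension rather than a two-sided bound.

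Now fix a cover $E=\bigcup_i E_i$ and a small $\e>0$, and set $\al'=\al-\e$, $\beta'=\beta-\e$. Writing $\cL=\{\ell\in A(2,1):\hdim(E\cap\ell)\ge\al\}$, so $\hdim(\cL)\ge\beta$, I would apply countable stability of Hausdorff dimension twice. For each $\ell\in\cL$ we have $\hdim(E\cap\ell)=\sup_i\hdim(E_i\cap\ell)\ge\al>\al'$, so $\ell$ belongs to some $\cL_i:=\{\ell\in\cL:\hdim(E_i\cap\ell)\ge\al'\}$; hence $\cL=\bigcup_i\cL_i$ and $\sup_i\hdim(\cL_i)=\hdim(\cL)\ge\beta>\beta'$, which lets me pick $i_0$ with $\hdim(\cL_{i_0})\ge\beta'$. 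By construction $E_{i_0}$ is an $(\al',\beta')$-Furstenberg set. For $\e$ small the parameters stay admissible, since $\al'\in(0,1/2]$ and $\beta'-2\al'=(\beta-2\al)+\e>0$, so the box-counting bound of the previous paragraph applies to $E_{i_0}$, giving $\ubdim(E_{i_0})\ge\al'+\beta'/2+c(\al',\beta')/2$. Because $\al'+\beta'/2=\al+\beta/2-3\e/2$ and $c$ depends continuously on its parameters, shrinking $\e$ makes the right-hand side at least $\al+\beta/2+c(\al,\beta)/4$. As this lower bound is uniform over all covers, the modified-box-dimension formula yields $\pdim(E)\ge\al+\beta/2+c'(\al,\beta)$ with, say, $c'(\al,\beta)=c(\al,\beta)/4$.

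I expect the main obstacle to be the double use of countable stability: one must ensure that, after decomposing, the fibers $E_i\cap\ell$ are captured by a \emph{single} piece $E_{i_0}$ over a set of lines that is still $\beta'$-dimensional, rather than having the Furstenberg structure dispersed among infinitely many pieces. The continuity of $c(\al,\beta)$ asserted in Theorem~\ref{thm:main-alpha-beta} is exactly what permits the unavoidable $\e$-loss, both in the exponent $\al'+\beta'/2$ and in the gained constant $c(\al',\beta')$, to be absorbed while keeping the final constant strictly positive.
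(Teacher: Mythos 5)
Your proof is correct and takes essentially the same route as the paper: the paper obtains $\ubdim(E)\ge\al+\beta/2+c/2$ by setting $s=1/2$ in Theorem \ref{thm:main-alpha-beta} exactly as you do, and for the upgrade to packing dimension it simply invokes ``a standard argument'' from \cite[Section 2]{Orponen20}, which is precisely the identification of $\pdim$ with the modified upper box dimension combined with the countable-stability argument (producing a single piece $E_{i_0}$ that is an $(\al-\e,\beta-\e)$-Furstenberg set) that you spell out. Your write-up just supplies the details the paper delegates to the reference, including the correct use of the continuity of $c(\al,\beta)$ to absorb the $\e$-loss.
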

Of course, this implies Theorem \ref{thm:main} as a special case.

Secondly, although our method falls short of proving that
\be \label{eq:E-large-scale-delta}
\cN_\delta(E)\gtrsim \delta^{-\al+\beta/2+c}
\ee
for all small $\de$ (which would imply that $\hdim(E)\ge  \al+\beta/2+c$), Theorem \ref{thm:main-alpha-beta} shows that the scales $\de$ such that \eqref{eq:E-large-scale-delta} fails are quite sparse (while $\ubdim(E)\ge \al+\beta/2+c$ only requires that \eqref{eq:E-large-scale-delta} holds for arbitrarily small $\de$) . Namely, given $\e>0$, for any small $\de_0$, if \eqref{eq:E-large-scale-delta} fails for $\de_0$, then it must hold for all $\de\in [\de_0^{1/2},\de_0^2] \setminus [\de_0^{1-\e},\de_0^{1+\e}]$ (with $c=c(\al,\beta,\e)$).

\subsection{Sketch of the proof of Theorem \ref{thm:main-alpha-beta}}

The methods of \cite{KatzTao01, Orponen20, HSY20} all consist in starting with a counter-assumption that the dimension is very close to the one we want to beat, and deduce (after some substantial processing and transformation of the set) that there must be some underlying ``product structure''. The proof is concluded by using Bourgain's discretized sum-product or projection theorems \cite{Bourgain03,Bourgain10} to parlay the product structure into a small dimensional gain. We use a similar approach: under the counter-assumption that \eqref{eq:main-scale-dichotomy} fails, we show that $E$ (after refinement) must have a quite rigid structure at scales $\des$ and $\de$. This structure (which is not quite of product type) enables the application of Bourgain's projection theorem. The reduction to Bourgain's projection theorem is different from, and perhaps simpler than, those of \cite{KatzTao01, Orponen20, HSY20} .

After some standard reductions detailed in \S\ref{subsec:reductions}, at scale $\de$ we replace the $(\al,\beta)$-Furstenberg set by a union $E'=\cup_{\om\in\Om} E_\om$, where $\Om$ is a $\de$-separated set of (parameters of) lines satisfying a $\beta$-dimensional Frostman assumptions, and $E_\om$ are subsets of the corresponding lines satisfying a uniform $\alpha$-dimensional Frostman bound.

We start with a counter-assumption that $\cN_{\des}(E) \le (\des)^{-\al-\beta/2-\eta}$ for a small $\eta>0$. Using a variant of the estimate that provides the lower bound $\al+\beta/2$ for the dimension of $(\alpha,\beta)$-Furstenberg sets, given in Lemma \ref{lem:alpha+gamma bound}, we show that the sets $E_\om$ must cluster nearly as much as possible at scale $\des$. Namely, we cover $E'$ (after refinement) by $\approx$ $(\des)^{-\beta}$ sets $F_\lam$, contained in rectangles of size $\des \times 1$ whose long directions are $\des$-separated, and such that $\cN_{\des}(F_\lam)$ is not much larger than $(\des)^{-\al}$. Moreover, each $F_\lam$ contains $\approx (\dess)^{\beta}$ of the sets $E_\om$. This analysis is carried out in \S\ref{subsec:scale-des}.

In \S\ref{subsec:analysis-F-lam}, we analyze a fixed set $F_\lam$ at scale $\de$. We show that it can be essentially decomposed into $\approx (\des)^{-\al}$ squares $Q_j$ of side length $\des$ such that ``most'' of the set $E_\om\subset F_\lam$ have large intersection with $Q_j$. By the assumption $s\ge 1/2$, the lines containing the sets $E_\om$ are nearly parallel inside $Q_j$. This enables the application of Bourgain's discretized projection theorem (recalled as Theorem \ref{thm:projection-Bourgain} below) to show that for most such $Q_j$ we have a box-counting gain:
\[
\cN_{\de}(E\cap Q_j) \ge (\dess)^{-\al-\beta/2-\zeta},
\]
where $\zeta>0$ depends only on $\al$ and $\beta$. Finally, a double counting of the squares $Q_j$ presented in \S\ref{subsec:conclusion-proof} allows us to conclude the proof of Theorem \ref{cor:main-alpha-beta}.

\section{Preliminaries}

\subsection{Notation}

We use the notation $A\lesssim B$ to mean $A\le C B$ for some constant $C>0$. If $C$ depends on certain parameters $\text{par}$, we write $A\lesssim_{\text{par}} B$. We write $A\gtrsim B$ for $B\lesssim A$ and $A\sim B$ for $A\lesssim B\lesssim A$, and likewise with parameters. Sometimes we write $A=O(B)$ to mean $A\lesssim B$.

We denote (open) balls of centre $x$ and radius $r$ in $\R^d$ by $B^d(x,r)$. We often omit the superscript $d$. We write $B^d=B^d(0,1)$ for short. The notation $B_r$ denotes an arbitrary ball of radius $r$ (with the ambient dimension fixed but implicit).

Given $\om=(a,b)$, we let $L_\om(t)=at+b$. Abusing notation slightly, we will also denote the line $\{ y=at+b\}$ by $L_\om$.

Recall that $\cN_\de(A)$ denotes the $\de$-covering number of $A$.

\subsection{Discretization}

As usual in this circle of problems, in the proof of Theorem \ref{thm:main-alpha-beta}, we will work with certain discretized versions of Furstenberg sets, which we now introduce.

\begin{definition}
A probability measure $\mu$ on $B^1$ is a $(\delta,\alpha,K)$-measure if
\[
\mu(B_r)\le K r^\alpha \quad\text{for all } r\in [\delta,1].
\]
\end{definition}

Note that a $(\delta,\alpha,K)$-measure is automatically a $(\delta',\alpha',K')$-measure whenever $\delta'\in [\delta,1]$, $\alpha'\le \alpha$ and $K'\ge K$. Also, if $\mu$ is a $(\delta,\alpha,K)$ measure and $\mu(A)\ge c>0$, then $\cN_\de(A)\gtrsim_{c,K,\alpha} \delta^{-\alpha}$. We will repeatedly use these facts without further mention. For us, the crucial parameters are the scale $\delta$ and the exponent $\alpha$; the value of the constant $K$ will not concern us too much.

\begin{definition}
We say that $(\nu_\om)_{\om\in\Om}$ is a $(\delta,\alpha,K)$-Furstenberg tuple if:
\begin{itemize}
  \item $\Om\subset B^2$ is a $\delta$-separated set,
  \item For each $\om$, there is a $(\de,\al,K)$-measure $\nu'_\om$ such that $\nu_\om= L'_\om \nu'_\om$, where $L'_\om(x)=(x,L_\om(x))$.
\end{itemize}
\end{definition}

We will obtain a Furstenberg tuple by starting with a Furstenberg set $E$, taking any $\delta$-separated set $\Omega$ of the (parameters of the) lines satisfying \eqref{eq:positive-content}, and using Frostman's Lemma to obtain a suitable measure $\nu_\om$ on $E\cap L_\om$ for each $\om\in\Om$. See \S\ref{subsec:reductions} for details.

In the following two lemmas, we fix a $(\delta,\alpha,K)$-Furstenberg tuple $(\nu_\om)_{\om\in\Om}$, and write $E_\om=\supp(\nu_\om)$ and $E=\cup_{\om\in\Om} E_\om$.

Our first lemma is a small box-counting variant of the argument giving the lower bound $1/2+\alpha$ for $\alpha$-Furstenberg sets.
\begin{lemma} \label{lem:alpha+gamma bound}
Suppose $\alpha>0$ and $\cN_\delta(E_\om)\ge M$ for all $\om\in\Omega$. Then
\[
(\cN_\delta(E))^2 \gtrsim_{K,\alpha} |\Omega| M\delta^{-\alpha}.
\]
\end{lemma}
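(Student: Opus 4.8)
The plan is to prove the estimate by double counting configurations consisting of a line together with a \emph{well-separated} pair of balls that it meets. Fix a cover of $E$ by $N:=\cN_\de(E)$ balls of radius $\de$. For each $\om$ let $\cB_\om$ be the balls of this cover that meet $E_\om$; since these cover $E_\om$ we have $|\cB_\om|\ge \cN_\de(E_\om)\ge M$, and each carries positive $\nu_\om$-mass. Choose a threshold $\rho_*=\rho_*(K,\al)$ by the requirement $K\rho_*^\al=1/2$, and consider
\[
W:=\#\{(\om,B,B'):\om\in\Om,\ B,B'\in\cB_\om,\ \dist(B,B')\ge \rho_*\}.
\]
I will bound $W$ from below using Frostman together with $|\cB_\om|\gtrsim M$, and from above using elementary incidence geometry; comparing the two bounds gives the claim.

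For the lower bound I argue on each line separately. Writing $\nu_\om=L'_\om\nu'_\om$ with $\nu'_\om$ a $(\de,\al,K)$-measure, for any $x_0\in\supp(\nu'_\om)$ the defining inequality gives $\nu'_\om(B^1(x_0,\rho_*))\le K\rho_*^\al=1/2$, so at least half of the mass sits at $x$-distance $\ge\rho_*$ from $x_0$. Since $\nu'_\om$ assigns mass $\lesssim_K\de^\al$ to each $\de$-interval, this far-away mass requires $\gtrsim_K\de^{-\al}$ distinct $\de$-balls to carry it. Because the slopes are bounded ($|a|\le 1$ on $B^2$), $x$-distances and Euclidean distances along $L_\om$ are comparable, so each $B\in\cB_\om$ is paired with $\gtrsim_K\de^{-\al}$ balls $B'\in\cB_\om$ at distance $\gtrsim\rho_*$. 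Running over the $\gtrsim M$ choices of $B$ gives $\gtrsim_K M\de^{-\al}$ pairs for each fixed $\om$, hence $W\gtrsim_K |\Om|\,M\,\de^{-\al}$. The asymmetry here (one factor $M$ from the covering number, one factor $\de^{-\al}$ from Frostman) is exactly what produces $M\de^{-\al}$ rather than $M^2$.

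For the upper bound I fix a pair $B,B'$ with $\dist(B,B')\ge\rho_*$ and count the $\om=(a,b)$ with $B,B'\in\cB_\om$. Such an $\om$ must satisfy $|ax+b-y|\lesssim\de$ at the two centres, which are horizontally separated by $\gtrsim\rho_*$ (if not, no bounded-slope line meets both and the count is $0$); subtracting the two constraints pins $a$ down to an interval of length $\lesssim\de/\rho_*$ and then $b$ to an interval of length $\lesssim\de$. As $\Om$ is $\de$-separated, the number of admissible $\om$ is $\lesssim 1/\rho_*\lesssim_{K,\al}1$. Since there are at most $N^2$ ordered pairs $(B,B')$, this yields $W\lesssim_{K,\al}N^2$, and combining with the lower bound gives $N^2\gtrsim_{K,\al}|\Om|\,M\,\de^{-\al}$.

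The step I expect to require the most care is the interaction with the Frostman constant $K$: a priori each $\nu_\om$ could be badly concentrated at unit scale, so one cannot simply count pairs separated by a fixed fraction of the diameter. Choosing $\rho_*$ to be the scale at which Frostman first forces half of the mass to escape is what makes the lower bound robust, while the \emph{same} choice keeps the number of lines through a separated pair bounded in the upper bound. Checking that these two demands are compatible, and that near-vertical pairs (where Euclidean and $x$-distances could diverge) cause no trouble, is the main technical point; everything else is bookkeeping with the constants hidden in $\gtrsim_{K,\al}$.
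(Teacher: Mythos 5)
Your argument is correct and is essentially the paper's own proof in a different bookkeeping format: both hinge on choosing the separation threshold $\rho_*$ so that Frostman forces half the mass of each $\nu_\om$ to escape, which yields one factor $M$ (nearby balls) and one factor $\delta^{-\alpha}$ (far balls), and on the fact that a well-separated pair of $\delta$-balls determines $\om$ up to $O_{K,\alpha}(1)$ choices in the $\delta$-separated set $\Om$. The paper organizes this as a count of $\delta$-separated points in $E\times E$ (via $\cN_\delta(E\times E)\sim(\cN_\delta(E))^2$) rather than as your incidence count $W\lesssim_{K,\alpha}N^2$, but the two are interchangeable and your constant-tracking (including the near-vertical degenerate case) is sound.
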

\begin{proof}
Pick $c=c_{K,\alpha}$ such that $K(3c)^\alpha\le 1/2$; hence $\nu_\om(B_{3c})\le 1/2$ for all $\om\in\Om$. By assumption, for each $\om\in\Om$ there is a segment of length $c$, say $I_\om\subset L_\om$, such that $\cN_\delta(E_\om\cap I_\om)\gtrsim_{c} M$. Since $\nu_\om(3I_\om)\le 1/2$, it follows from the Frostman condition $\nu_\om(B_r)\le K r^\alpha$ that $\cN_\delta(E_\om\setminus 3I_\om)\gtrsim_K \delta^{-\alpha}$.

Let $\ell_{x,y}$ denote the line through $x$ and $y$. If $x$ is at distance $\le \delta$ from $I_\om$ and $y$ is at distance $\le\delta$ from $L_\om \setminus 3 I_\om$, then the distance on $A(2,1)$ between $\ell_{x,y}$ and $L_\om$ is $\lesssim_K \de$.  This implies that if $d(\om_1,\om_2)\ge C\delta$, for a sufficiently large constant $C=C_K$, then the sets $(E_{\om_i}\cap I_{\om_i}) \times (E_{\om_i}\setminus 3 I_{\om_i})$, $i=1,2$, are $\delta$-separated. Thus
\[
(\cN_\delta(E))^2\sim \cN_\delta(E\times E) \ge \cN_\de\left(\bigcup_{\om\in\Om} E_\om\times E_\om\right) \gtrsim_{K,\alpha} |\Omega|M\delta^{-\alpha},
\]
giving the claim.
\end{proof}

\begin{lemma} \label{lem:fan}
If there is a point $x$ such that $\dist(x,L_\om)\le C\delta$ for all $\om\in\Om$, then
\[
\cN_\delta(E) \gtrsim_{C,K} |\Om|\delta^{-\alpha}.
\]
\end{lemma}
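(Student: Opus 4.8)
The plan is to prove that, away from the common point $x$, the sets $E_\om$ have bounded overlap at scale $\de$, so that their covering numbers essentially add up; the whole difficulty is that \emph{near} $x$ all the lines pile up, so bounded overlap can only hold after excising a neighbourhood of $x$. I would first record the Frostman input. Since $\nu_\om=L'_\om\nu'_\om$ with $\nu'_\om$ a $(\de,\al,K)$-measure and $L'_\om$ bi-Lipschitz, any subset of $E_\om$ carrying $\nu_\om$-mass $\ge 1/2$ still satisfies $\cN_\de(\cdot)\gtrsim_{K,\al}\de^{-\al}$, because each $\de$-ball has $\nu_\om$-measure $\lesssim_K\de^\al$.

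Write $x=(x_0,y_0)$ and choose $\rho=\rho_{K,\al}\in(0,1]$ with $K\rho^\al\le 1/2$. The first step is to excise the vertical strip over $\{t:|t-x_0|<\rho\}$: set $E_\om^{g}=E_\om\cap\{(t,y):|t-x_0|\ge\rho\}$. The Frostman bound gives $\nu'_\om(\{|t-x_0|<\rho\})\le K\rho^\al\le 1/2$, hence $\nu_\om(E_\om^{g})\ge 1/2$ and therefore $\cN_\de(E_\om^{g})\gtrsim_{K,\al}\de^{-\al}$ for all small $\de$ (here one needs $\de\le\rho$ so that the Frostman condition applies).

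The key step, and the technical heart of the argument, is a bounded-overlap estimate: every $\de$-ball meets at most $O_{C,K}(1)$ of the sets $E_\om^{g}$. Indeed, suppose a $\de$-ball centred at $P$ meets $E_\om^{g}$; then $L_\om$ passes within $O(\de)$ of $P$, while by hypothesis $\dist(x,L_\om)\le C\de$, and since $P$ has first coordinate at distance $\ge\rho/2$ from $x_0$ we have $|P-x|\gtrsim\rho$. Thus every relevant $L_\om$ passes within $O(C\de)$ of two fixed points, $x$ and $P$, that are $\gtrsim\rho$ apart. Equivalently, two such lines intersect within $O(C\de)$ of $x$, so they can be $\de$-close only in a $t$-window of width $\sim\de/|a_1-a_2|$ about that intersection; away from $x$ this forces $|a_1-a_2|\lesssim_C\de/\rho$, and the incidence with $P$ then pins down the intercept to an interval of length $O(\de)$. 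Writing this in the coordinates $\om=(a,b)$, the admissible parameters fill a parallelogram of dimensions $O_C(\de/\rho)\times O(\de)$, which contains only $O_{C,K}(1)$ points of the $\de$-separated set $\Om$. The role of the excision is precisely to guarantee $|P-x|\gtrsim\rho$; without it this count would blow up near $x$.

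Finally I would combine these via the standard counting inequality: if each $\de$-ball meets at most $m$ of the $E_\om^{g}$, then $\cN_\de\bigl(\bigcup_\om E_\om^{g}\bigr)\ge m^{-1}\sum_\om \cN_\de(E_\om^{g})$, since a cover of the union restricts to a cover of each piece and is charged at most $m$ times. With $m=O_{C,K}(1)$ and $\cN_\de(E_\om^{g})\gtrsim_{K,\al}\de^{-\al}$ this gives
\[
\cN_\de(E)\ \ge\ \cN_\de\Bigl(\bigcup_{\om\in\Om} E_\om^{g}\Bigr)\ \gtrsim_{C,K}\ |\Om|\,\de^{-\al},
\]
as claimed. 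The only points requiring genuine care are the overlap box of the previous paragraph and the fact that the lines pass within $C\de$ of $x$ rather than exactly through it, which merely inflates the implied constants.
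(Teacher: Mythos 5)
Your proof is correct and follows essentially the same route as the paper: excise a fixed neighbourhood of $x$ (the paper removes a ball $B(x,c)$ with $K c^\al \le 1/2$, you remove a vertical strip of width $\rho$ with $K\rho^\al\le 1/2$), retain half the $\nu_\om$-mass so each piece still has covering number $\gtrsim_{K,\al}\de^{-\al}$, and then use the $\de$-separation of $\Om$ to get essential disjointness of the pieces away from $x$. The only difference is that you spell out the ``elementary geometry'' step via the $O_C(\de/\rho)\times O(\de)$ parameter parallelogram and a bounded-overlap count, where the paper simply asserts $\de$-separation of the pieces for $C'\de$-separated parameters; both yield the same conclusion.
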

\begin{proof}
Pick $c=c_{K,\alpha}$ small enough that $\nu_\om(B_c)\le 1/2$ for all $\om\in\Om$. By elementary geometry, if $d(\om_1,\om_2)\ge C'\delta$ for a sufficiently large constant $C'=C'_{K,c}$, the sets $E_{\om_i}\setminus B(x,c)$ are $\delta$-separated. Since $\cN_\delta(E_\om\setminus B(x,c))\gtrsim_K \delta^{-\alpha}$, the lemma follows.
\end{proof}

\subsection{Bourgain's projection theorem}

We state Bourgain's discretized projection theorem for later reference. We borrow the version from \cite[Theorem 1]{He18}, in the special case of projections from the plane to the real line. Given $\theta\in S^1$, let $P_\theta:\R^2\to\R$, $x\mapsto \langle x,\theta\rangle$ denote projection in direction $\theta$.
\begin{theorem} \label{thm:projection-Bourgain}
Given $0<u<2$ and $0<t<1$ there exist $\e,\zeta>0$ such that the following holds for all sufficiently small $\de$. Let $X\subset B^2$, and let $\rho$ be a probability measure on $S^1$ such that the following hold:
\begin{align*}
\cN_\delta(X) &\ge \delta^{\e-u},\\
\cN_\delta(X\cap B(x,r)) &\le \delta^{-\e} r^t \cN_\delta(X) \quad\text{for all } r\in [\delta,1], x\in B^2,\\
\rho(B_r) &\le \delta^{-\e} r^t \quad\text{for all } r\in [\de,1].
\end{align*}
Then there is a set $\Theta\subset S^1$ with $\rho(\Theta)\le \delta^\e$ such that if $\theta\in S^1\setminus \Theta$ and  $X'\subset X$ satisfies $\cN_\delta(X') \ge \de^\e\cN_\delta(X)$, then
\[
\cN_\delta(P_\theta X') \ge \de^{-\frac{u}{2}-\zeta}.
\]
\end{theorem}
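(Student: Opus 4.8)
This statement is Bourgain's discretized projection theorem, quoted (in this special planar form) from He; as such, the honest ``proof'' is really the citation, and I would not attempt to reprove it from scratch. Nevertheless, the route I would take---and the one underlying He's argument---is to reduce the projection estimate to the \emph{discretized sum-product theorem} of Bourgain, which is the genuine engine. The guiding dictionary is that the exponent $u/2$ to be beaten is exactly the bound saturated by product sets: if $X'$ projects to a set of size close to $\de^{-u/2}$ in two transverse directions while $\cN_\de(X')\approx\de^{-u}$, then $X'$ must resemble a grid $A\times B$ with $\cN_\de(A)\cN_\de(B)\approx\de^{-u}$. The content of the theorem is that this near-product structure cannot persist across a $\rho$-positive set of directions once the non-concentration exponent $t$ is in force.

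First I would parametrize directions by slopes and pass to the pinned projections $x\mapsto\la x,\theta\ra$. Arguing by contradiction, suppose there is a set $\Theta_0\subset S^1$ with $\rho(\Theta_0)>\de^\e$ and, for each $\theta\in\Theta_0$, a subset $X'_\theta\subset X$ with $\cN_\de(X'_\theta)\ge\de^\e\cN_\de(X)$ whose projection satisfies $\cN_\de(P_\theta X'_\theta)\le\de^{-u/2-\zeta}$. Smallness of the projection forces many coincidences: a positive proportion of pairs of points of $X'_\theta$ lie in a common $\de$-tube in direction $\theta^\perp$. Recording, for each near-colliding pair $(p,q)$, the slope of $p-q$, one sees that the difference set $X-X$ concentrates on tubes through the origin whose directions cluster near $\Theta_0^\perp$; this is precisely where multiplicative structure (ratios of differences) enters the picture.

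Next I would extract a structured subset. A pigeonholing over scales together with a discretized Balog--Szemer\'edi--Gowers argument produces, from the high multiplicative energy encoded above, a set of reals with simultaneously small additive and multiplicative $\de$-expansion, while retaining a Frostman exponent inherited from the hypotheses on $X$ and on $\rho$ at level $t$. Applying the discretized sum-product theorem to this set contradicts its non-concentration unless the projections were in fact larger than $\de^{-u/2}$ by a definite power $\de^{-\zeta}$, with $\zeta=\zeta(u,t)>0$. Taking $\Theta$ to be the set of all directions admitting such a large, badly-projecting subset and bounding $\rho(\Theta)$ by running the contradiction yields the exceptional set with $\rho(\Theta)\le\de^\e$ in the statement.

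The main obstacle is, unsurprisingly, the sum-product theorem itself: this projection estimate is essentially equivalent to it in strength, so there is no way to bypass that deep input, which I would treat as a black box. The subtler bookkeeping difficulty is quantifier management---propagating the two non-concentration hypotheses through the Balog--Szemer\'edi--Gowers extraction, and ensuring the gain $\zeta$ is uniform over \emph{all} admissible subsets $X'$ and depends only on $(u,t)$, not on $X$ or $\rho$. For these reasons I would, as the author does, simply invoke He's Theorem~1.
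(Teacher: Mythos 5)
The paper does not prove this statement; it is quoted verbatim (in the planar special case) from He's Theorem~1, and your proposal correctly identifies that the honest treatment is the citation, which is exactly what the author does. Your accompanying sketch of the reduction to the discretized sum--product theorem is a reasonable high-level account of the underlying argument, but it plays no role in the paper itself, so there is nothing further to compare.
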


It follows from the robustness of the hypotheses that $\eta,\zeta$ can be taken continuous in $t,u$.

\section{Proof of Theorem \ref{thm:main-alpha-beta}}

\subsection{Initial reductions} \label{subsec:reductions}

Recall that the (spherical) $\gamma$-dimensional Hausdorff content is defined as
\[
\mathcal{H}_\infty^\gamma(A) = \inf\left\{ \sum_i r_i^\gamma: A\subset \cup_i B(x_i,r_i)\right \}.
\]
Standard, easy to verify properties are that $\mathcal{H}_\infty^\gamma$ is countably subadditive, and $\mathcal{H}_\infty^\gamma(A)>0$ if and only if $\mathcal{H}^\gamma(A)>0$, with the latter denoting standard Hausdorff measure.

Let $E$ be an $(\al,\beta)$-Furstenberg set and let $\mathcal{L}$ denote the set of lines $\ell$ such that $\hdim(E\cap\ell)\ge \al$; then $\hdim(\mathcal{L})\ge \beta$ by definition. By passing to a subset of $\mathcal{L}$, we may assume that the slopes of all lines in $\mathcal{L}$ either are $\ge 1$ or $\le 1$ in absolute value. By rotating $E$ if needed, we may assume that all slopes are $\le 1$ in absolute value. We can then identify $\mathcal{L}$ with a subset of $[-1,1]\times\R$ via $\om\leftrightarrow L_\om$; this is a smooth identification, in particular it preserves Hausdorff dimension.

Fix $\e>0$. By the countable stability of Hausdorff dimension and the countable subadditivity of Hausdorff content, we may find $b_0\in\R$ such that
\[
\mathcal{H}_\infty^{\beta-\e}(\{\om:L_\om\in\mathcal{L}\}\cap B^2((0,b_0),1)) > 0.
\]
By translating $E$, we may assume that $b_0=0$.  Now, since $\mathcal{H}_\infty^{\al-\e}(E\cap L_\om)>0$ for all $L_\om\in\mathcal{L}$, the countable subadditivity of $\mathcal{H}_\infty^{\beta-\e}$ shows that there exists $c>0$ such that
\[
\mathcal{H}_\infty^{\beta-\e}\{\om\in B^2: \mathcal{H}_\infty^{\al-\e}(E\cap L_\om)\ge c \} > 0.
\]
Since $\e>0$ is arbitrary, this shows that in order to establish Theorem \ref{thm:main-alpha-beta}, we may assume that $\mathcal{L}\subset \{ L_\om: \om\in\wt{\Om}\}$ for some $\wt{\Om}\subset B^2$, $\mathcal{H}_\infty^\beta(\wt{\Om})>0$, and
\be \label{eq:positive-content}
\mathcal{H}_\infty^\alpha(E\cap L_\om) \ge c > 0
\ee
for all $\om\in\wt{\Om}$.

Fix a small $\de>0$ and $s\in [1/2,1)$. In the rest of the paper, any implicit constants are allowed to depend on $K,c,\alpha$ and $\beta$, but they must always be independent of $\de$.

F\"{a}ssler and Orponen \cite[Proposition A.1]{FasslerOrponen14} proved a discretized version of Frostman's Lemma (they stated it in $\R^3$ but the same proof works in any dimension). Applying this result to the set $\wt{\Om}$, we obtain a $\delta$-separated set $\Om\subset \wt{\Om}$ such that
\eqref{eq:positive-content} holds for all $\om\in \Om$ and, moreover, $\Om$ satisfies the Frostman condition
\be \label{eq:Om-Frostman}
|\Om\cap B_r|\lesssim r^\beta |\Om|\quad\text{for all }r\in [\de,1].
\ee
Since $\Om$ is nonempty we get that, in particular, $|\Om|\gtrsim \de^{-\beta}$.

Let $E^{(\delta)}$ denote the $\delta$-neighborhood of $E$. Since $\cN_\de(E^{(\de)})\sim \cN_\de(E)$ and $\cN_{\des}(E^{(\de)})\sim \cN_{\des}(E)$, we may replace $E$ by $E^{(\delta)}$; in particular, this allows to assume that $E$ is Borel. Applying Frostman's Lemma to each $E\cap L_\om$, we obtain a $(\delta,\alpha,K)$-Furstenberg tuple $(\nu_\om)_{\om\in\Om}$ where $K=K(c)$. We recall that in Frostman's Lemma the constant factor depends only on the Hausdorff content on the set, and therefore \eqref{eq:positive-content} ensures that a suitable uniform $K$ can be found. Note that
\[
E\supset \bigcup_{\om\in\Om} \supp(\nu_\om) =: \bigcup_{\om\in\Om} E_\om.
\]
and hence it suffices to estimate $\cN_\de$, $\cN_{\des}$ for the union on the right-hand side.

\subsection{Analysis at scale $\des$}
\label{subsec:scale-des}

Assume that
\be  \label{eq:bound-scale-sqr-delta}
\cN_{\des}(E) \le (\des)^{-\alpha-\beta/2-\eta}
\ee
We first show that this counter-assumption forces a rather rigid structure on $E$ at scale $\des$ (at least after refinement of $E$).

Let $\Lambda$ be a maximal $\des$-separated subset of $\Om$. Note that $|\Lambda|\gtrsim (\des)^{-\beta}$ by \eqref{eq:Om-Frostman}.
\begin{lemma} \label{lem:upper-bound-Lambda}
\[
|\Lambda| \lesssim (\des)^{-\beta-2\eta}.
\]
\end{lemma}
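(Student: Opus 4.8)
The plan is to deduce the bound from the ``$\alpha+\beta/2$'' estimate of Lemma \ref{lem:alpha+gamma bound}, but applied at the coarser scale $\des$ rather than at scale $\de$, combined with the counter-assumption \eqref{eq:bound-scale-sqr-delta}. The separation hypothesis on $\Lambda$ is precisely what supplies the number-of-lines factor in that lemma.

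First I would observe that the subtuple $(\nu_\om)_{\om\in\Lambda}$ is a $(\des,\alpha,K)$-Furstenberg tuple. Indeed, $\Lambda$ is $\des$-separated by its very definition, and each $\nu'_\om$, being a $(\de,\alpha,K)$-measure, is automatically a $(\des,\alpha,K)$-measure since $\des\in[\de,1]$ (recall $s\in[1/2,1)$). In particular, for each $\om\in\Lambda$ the Frostman bound gives $\cN_{\des}(E_\om)\gtrsim_{K,\alpha}(\des)^{-\alpha}$, so we may take $M=c_{K,\alpha}(\des)^{-\alpha}$ when invoking Lemma \ref{lem:alpha+gamma bound} at scale $\des$ for this subtuple.

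Applying Lemma \ref{lem:alpha+gamma bound} (with $\de$ replaced by $\des$, the index set replaced by $\Lambda$, and $M=(\des)^{-\alpha}$ up to constants) yields
\[
\left(\cN_{\des}\Bigl(\bigcup_{\om\in\Lambda}E_\om\Bigr)\right)^2
\gtrsim_{K,\alpha} |\Lambda|\,(\des)^{-\alpha}\,(\des)^{-\alpha}
= |\Lambda|\,(\des)^{-2\alpha}.
\]
Since $\bigcup_{\om\in\Lambda}E_\om\subset E$ and hence $\cN_{\des}(\bigcup_{\om\in\Lambda}E_\om)\le \cN_{\des}(E)$, the counter-assumption \eqref{eq:bound-scale-sqr-delta} bounds the left-hand side by $(\des)^{-2\alpha-\beta-2\eta}$. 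Comparing the two displays gives $|\Lambda|\,(\des)^{-2\alpha}\lesssim (\des)^{-2\alpha-\beta-2\eta}$, which after cancelling $(\des)^{-2\alpha}$ is exactly the claimed estimate $|\Lambda|\lesssim (\des)^{-\beta-2\eta}$.

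I do not expect a genuine obstacle here: the argument is a clean one-line application of the earlier lemma, and the only points requiring care are bookkeeping ones, namely verifying that passing to $\Lambda$ and to the scale $\des$ preserves the Furstenberg-tuple structure, and that the per-line covering number used in the lemma is the full $(\des)^{-\alpha}$ (guaranteed by the uniform Frostman exponent $\alpha$). The conceptual content is simply that the counter-assumption, which caps $\cN_{\des}(E)$, forces the number of $\des$-separated lines to be essentially no larger than the Frostman lower bound $(\des)^{-\beta}$; this near-saturation is what drives the subsequent rigidity analysis of \S\ref{subsec:scale-des}.
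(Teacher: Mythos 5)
Your proof is correct and follows the paper's argument exactly: both verify that $(\nu_\om)_{\om\in\Lambda}$ is a $(\des,\alpha,K)$-Furstenberg tuple with $\cN_{\des}(E_\om)\gtrsim(\des)^{-\alpha}$, then apply Lemma \ref{lem:alpha+gamma bound} at scale $\des$ together with the counter-assumption \eqref{eq:bound-scale-sqr-delta}. Your write-up just makes explicit the bookkeeping that the paper leaves implicit.
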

\begin{proof}
Note that $( \nu_\om )_{\om\in\Lambda}$ is a $(\des,\alpha,K)$-Furstenberg tuple; in particular, $\cN_{\des}(E_\om)\gtrsim (\des)^{-\al}$. The claim follows from Lemma \ref{lem:alpha+gamma bound} and our assumption \eqref{eq:bound-scale-sqr-delta}.
\end{proof}

For each $\lambda\in\Lambda$, let
\[
\Om_\lam = \Om\cap B^2(\lam,\des).
\]
Note that the sets $\Om_\lam$ have bounded overlapping and cover $\Om$.

\begin{lemma} \label{lem:many-rich-deh-tubes}
There is a set $\Lam'\subset\Lam$ such that
\[
|\Lambda'| \gtrsim (\des)^{-\beta}
\]
and
\be \label{eq:Lambda'}
|\Om_\lam| \gtrsim (\des)^{\beta+2\eta}|\Om|
\ee
for all $\lam\in\Lam'$.
\end{lemma}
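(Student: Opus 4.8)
The plan is to find $\Lam'$ via a pigeonholing argument that combines the lower bound $|\Lam|\gtrsim(\des)^{-\beta}$ with the upper bound from Lemma \ref{lem:upper-bound-Lambda}, while using that the sets $\Om_\lam$ cover $\Om$ with bounded overlap and that $|\Om|\gtrsim\de^{-\beta}$. The guiding principle is that $\Om$ has roughly $\de^{-\beta}$ elements distributed among roughly $(\des)^{-\beta}$ balls $B^2(\lam,\des)$; if the $\Om_\lam$ were all of comparable size, each would contain about $\de^{-\beta}/(\des)^{-\beta}=\de^{-\beta(1-s)}=(\des)^{\beta(1/s-1)}$ points. We want \eqref{eq:Lambda'} to hold with exponent $\beta+2\eta$, and since $\eta$ is small this is a genuine lower bound of the expected shape for the ``rich'' balls once the over-counting by the factor $(\des)^{-2\eta}$ in $|\Lam|$ is absorbed.

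The key computation I would carry out is a double count of $|\Om|$ against the decomposition $\Om=\bigcup_{\lam\in\Lam}\Om_\lam$. First I would split $\Lam=\Lam'\sqcup\Lam''$, where $\Lam'=\{\lam\in\Lam:|\Om_\lam|\ge\tfrac12(\des)^{\beta+2\eta}|\Om|\}$ is the set of rich $\lam$ and $\Lam''$ is its complement. By bounded overlap,
\[
|\Om|\lesssim\sum_{\lam\in\Lam}|\Om_\lam|=\sum_{\lam\in\Lam'}|\Om_\lam|+\sum_{\lam\in\Lam''}|\Om_\lam|.
\]
The contribution of the poor balls is controlled using Lemma \ref{lem:upper-bound-Lambda}:
\[
\sum_{\lam\in\Lam''}|\Om_\lam|\le|\Lam''|\cdot\tfrac12(\des)^{\beta+2\eta}|\Om|\lesssim(\des)^{-\beta-2\eta}\cdot(\des)^{\beta+2\eta}|\Om|=C|\Om|
\]
for an absolute constant $C$. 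To make this argument work I must ensure the implicit constant in Lemma \ref{lem:upper-bound-Lambda} and the threshold factor combine so that the poor contribution is at most half of $|\Om|$; this is arranged by choosing the threshold constant (here $\tfrac12$, but in fact a small enough constant depending on the implicit constants) appropriately. With the poor balls absorbing at most half of $|\Om|$, the rich balls must account for the remaining half: $\sum_{\lam\in\Lam'}|\Om_\lam|\gtrsim|\Om|$.

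Finally, to obtain the lower bound $|\Lam'|\gtrsim(\des)^{-\beta}$ I would feed in the upper bound on the individual sizes $|\Om_\lam|$. Each $\Om_\lam$ satisfies $|\Om_\lam|=|\Om\cap B_{\des}|\lesssim(\des)^\beta|\Om|$ directly from the Frostman condition \eqref{eq:Om-Frostman}. Hence
\[
|\Om|\lesssim\sum_{\lam\in\Lam'}|\Om_\lam|\lesssim|\Lam'|\cdot(\des)^\beta|\Om|,
\]
which rearranges to $|\Lam'|\gtrsim(\des)^{-\beta}$, as desired. I expect the main subtlety to lie not in either inequality individually but in the bookkeeping of constants: the threshold defining $\Lam'$ must be chosen so that the poor contribution is strictly dominated by $|\Om|$, which forces the constant to depend on the implicit constant of Lemma \ref{lem:upper-bound-Lambda} (and hence ultimately on $K,\alpha,\beta$), and one must check that the factor $(\des)^{2\eta}$ genuinely cancels the $(\des)^{-2\eta}$ from the cardinality bound rather than merely being absorbed into an unspecified constant. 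Once that cancellation is made explicit the two displayed estimates close the proof.
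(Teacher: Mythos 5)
Your proof is correct and is essentially the paper's own argument: the same threshold decomposition of $\Lam$ into rich and poor balls, the same use of Lemma \ref{lem:upper-bound-Lambda} to make the poor contribution at most $|\Om|/2$, and the same Frostman bound $|\Om_\lam|\lesssim(\des)^{\beta}|\Om|$ to extract $|\Lam'|\gtrsim(\des)^{-\beta}$. (Two cosmetic slips in your preamble --- the inequality $|\Om|\le\sum_{\lam}|\Om_\lam|$ comes from the covering property rather than bounded overlap, and $\de^{-\beta}/(\des)^{-\beta}=\de^{-\beta(1-s)}$ equals $(\des)^{\beta(1-1/s)}$ rather than $(\des)^{\beta(1/s-1)}$ --- do not affect the actual argument.)
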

\begin{proof}
Let
\[
\Lambda' =\{\lam\in\Lam: |\Om_\lam| \ge C^{-1}(\des)^{\beta+2\eta}|\Om|\}.
\]
It follows from Lemma \ref{lem:upper-bound-Lambda} that if $C$ is taken large enough then
\[
\sum_{\lam\in\Lam\setminus\Lam'} |\Om_\lam| \le |\Om|/2.
\]
Therefore, using \eqref{eq:Om-Frostman},
\[
|\Om|/2\le \sum_{\lam\in\Lam'} |\Om_\lam| \lesssim |\Lam'| (\des)^{\beta}|\Om|,
\]
as claimed.
\end{proof}

Fix $\lam\in\Lam'$. Let
\begin{align*}
\rho_\lam&= \frac{1}{|\Om_\lam|} \sum_{\om\in \Om_\lam} \nu_\om,\\
F_\lam &= \bigcup_{\om\in\Om_\lam} E_\om =\supp(\rho_\lam).
\end{align*}
Let us also denote by $\wt{\rho}_\lam$, $\wt{F}_\lam$ the orthogonal projections of $\rho_\lam, F_\lam$ onto the line $L_\lam$.

\begin{lemma} \label{lem:Frostman-average}
\[
\wt{\rho}_\lam(B_r) \lesssim r^\alpha \quad \text{for } r\in [\de,1].
\]
\end{lemma}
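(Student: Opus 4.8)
The plan is to show that the averaged projected measure $\wt{\rho}_\lam$ inherits a uniform $\alpha$-Frostman bound from the individual fibre measures $\nu_\om$. First I would recall that each $\nu_\om$ is the pushforward under $L'_\om(x)=(x,L_\om(x))$ of a $(\de,\al,K)$-measure $\nu'_\om$ supported on $B^1$, and that $\wt{\rho}_\lam$ is the orthogonal projection onto $L_\lam$ of the average $\rho_\lam=|\Om_\lam|^{-1}\sum_{\om\in\Om_\lam}\nu_\om$. Since the projection of an average is the average of the projections, and since a ball $B_r$ on $L_\lam$ pulls back under the projection $P_{L_\lam}^{\perp}$ to a strip of width $\sim r$, it suffices to bound $(P_{L_\lam}^{\perp})_*\nu_\om(B_r)$ uniformly over $\om\in\Om_\lam$ by $\lesssim r^\alpha$, and then average.

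The key geometric step is the observation that all $\om\in\Om_\lam$ lie in $B^2(\lam,\des)$, so the lines $L_\om$ are nearly parallel to $L_\lam$: their slopes differ by $O(\des)\le O(\de^{1/2})$ for $s\ge 1/2$. Consequently the coordinate $x\mapsto x$ along the horizontal axis, the arc-length parameter along $L_\om$, and the arc-length parameter along $L_\lam$ (after orthogonal projection) are all comparable up to bounded multiplicative constants and an additive error that is negligible at scales $r\ge\de$. More precisely, I would argue that the composition of $L'_\om$ (which lifts $B^1$ to the graph $L_\om$) with orthogonal projection onto $L_\lam$ is a bi-Lipschitz map of $B^1$ onto its image, with Lipschitz constants depending only on the bound $|\text{slope}|\le 1$ (recall the reduction in \S\ref{subsec:reductions} forced all slopes to be $\le 1$ in absolute value). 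Under such a map the preimage of any ball $B_r$ is contained in a ball of radius $\lesssim r$, and therefore the Frostman bound $\nu'_\om(B_{Cr})\le K(Cr)^\alpha\lesssim r^\alpha$ transfers directly:
\[
(P_{L_\lam}^{\perp})_*\nu_\om(B_r)=\nu'_\om\big((P_{L_\lam}^{\perp}\circ L'_\om)^{-1}B_r\big)\lesssim \nu'_\om(B_{Cr})\lesssim r^\alpha.
\]

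Averaging this bound over $\om\in\Om_\lam$ gives $\wt{\rho}_\lam(B_r)\lesssim r^\alpha$ for all $r\in[\de,1]$, as desired; the implicit constant absorbs $K$ together with the bi-Lipschitz constants, which is acceptable since all constants are permitted to depend on $K,c,\alpha,\beta$. The main obstacle, and the only point requiring genuine care, is verifying that the bi-Lipschitz constant of $P_{L_\lam}^{\perp}\circ L'_\om$ is uniform over $\om\in\Om_\lam$ and independent of $\de$: the near-parallelism must be quantified so that the distortion stays bounded even as $\om$ ranges over the full $\des$-ball around $\lam$. I expect this to follow from elementary planar trigonometry using that the angle between $L_\om$ and $L_\lam$ is $O(\des)$ and that both have slope bounded by $1$, so the projection $P_{L_\lam}^{\perp}$ restricted to $L_\om$ is a linear isometry up to a factor $\cos(\text{angle})=1+O(\de)$, which is harmless at the scales $r\ge\de$ under consideration. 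Once this uniformity is in hand the averaging is immediate, so the lemma reduces entirely to this one geometric comparison.
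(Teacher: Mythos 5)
Your proposal is correct and follows essentially the same route as the paper: the paper's proof likewise observes that each $L_\om$, $\om\in\Om_\lam$, makes angle $\lesssim\des$ with $L_\lam$, so the orthogonal projection onto $L_\lam$ preserves the Frostman condition of each $\nu_\om$ up to a constant, and the bound passes to the average. Your write-up merely makes explicit the bi-Lipschitz bookkeeping that the paper leaves implicit.
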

\begin{proof}
Since each $L_\om, \om\in\Om_\lam$ makes an angle $\lesssim \des\ll 1$ with $L_\lam$, the orthogonal projection of $\nu_\om$ to $L_\lam$ satisfies the same Frostman condition as $\nu_\om$ (other than the constant), and hence so does an average of such projections.
\end{proof}

\begin{lemma} \label{lem:deh-rectangles-small-projection}
If $C=C_{K,\alpha}$ is large enough,
\begin{equation} \label{eq:deh-rectangles-small-projection}
\left|\{\lam\in\Lam': \cN_{\des}(\wt{F}_\lam) \ge C(\des)^{-\alpha-2\eta}\}\right| \le |\Lam'|/2.
\end{equation}
\end{lemma}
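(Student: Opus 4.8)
The plan is to argue by contradiction. Suppose that the set
\[
\Lam'' := \{\lam\in\Lam': \cN_{\des}(\wt{F}_\lam) \ge C(\des)^{-\alpha-2\eta}\}
\]
has cardinality $|\Lam''| > |\Lam'|/2$; since $|\Lam'|\gtrsim(\des)^{-\beta}$ by Lemma \ref{lem:many-rich-deh-tubes}, this gives $|\Lam''|\gtrsim(\des)^{-\beta}$. I will derive from this a lower bound on $\cN_{\des}(E)$ that contradicts the counter-assumption \eqref{eq:bound-scale-sqr-delta} once $C$ is large. The key observation is that, working at the single scale $\des$, the projected family $(\wt{\rho}_\lam)_{\lam\in\Lam''}$ is itself a Furstenberg tuple, so Lemma \ref{lem:alpha+gamma bound} applies to it with $\des$ playing the role of the base scale.

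First I would record the relevant geometry. For $\om\in\Om_\lam$ the parameters satisfy $\|\om-\lam\|\le\des$, so $|L_\om(t)-L_\lam(t)|\lesssim\des$ for $t\in[-1,1]$; hence each $E_\om$ with $\om\in\Om_\lam$, and thus all of $F_\lam$, lies in an $O(\des)$-neighborhood of the line $L_\lam$. Consequently the orthogonal projection onto $L_\lam$ moves points of $F_\lam$ by at most $O(\des)$, so $\wt{F}_\lam$ is contained in the $O(\des)$-neighborhood of $F_\lam\subset E$. Taking the union over $\lam\in\Lam''$ gives $\bigcup_{\lam\in\Lam''}\wt{F}_\lam\subset E^{(O(\des))}$, and therefore $\cN_{\des}\big(\bigcup_{\lam\in\Lam''}\wt{F}_\lam\big)\lesssim\cN_{\des}(E)$.

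Next I would check that $(\wt{\rho}_\lam)_{\lam\in\Lam''}$ is a uniform constant multiple of a $(\des,\alpha,K')$-Furstenberg tuple. The index set $\Lam''\subset\Lam$ is $\des$-separated, as required. Each $\wt{\rho}_\lam$ is supported on $L_\lam$, hence of the form $L'_\lam\wt{\rho}'_\lam$; and Lemma \ref{lem:Frostman-average} gives $\wt{\rho}_\lam(B_r)\lesssim r^\alpha$ for $r\in[\de,1]\supset[\des,1]$, which transfers (through the bi-Lipschitz coordinate $x\mapsto(x,L_\lam(x))$, with constants controlled since all slopes are $\le1$) to the bound $\wt{\rho}'_\lam(B_r)\lesssim r^\alpha$, with a uniform constant $K'$. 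Moreover, by the definition of $\Lam''$ we have $\cN_{\des}(\wt{F}_\lam)\ge C(\des)^{-\alpha-2\eta}=:M$ for every $\lam\in\Lam''$. Applying Lemma \ref{lem:alpha+gamma bound} at scale $\des$ to this tuple yields
\[
\Big(\cN_{\des}\big(\textstyle\bigcup_{\lam\in\Lam''}\wt{F}_\lam\big)\Big)^2 \gtrsim |\Lam''|\,M\,(\des)^{-\alpha} \gtrsim C\,(\des)^{-\beta-2\alpha-2\eta}.
\]
Combined with $\cN_{\des}(\bigcup_{\lam}\wt{F}_\lam)\lesssim\cN_{\des}(E)$ this gives $\cN_{\des}(E)\gtrsim C^{1/2}(\des)^{-\alpha-\beta/2-\eta}$, with an implicit constant depending only on $K$ and $\alpha$. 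For $C=C_{K,\alpha}$ large enough, this contradicts $\cN_{\des}(E)\le(\des)^{-\alpha-\beta/2-\eta}$ from \eqref{eq:bound-scale-sqr-delta}. Hence $|\Lam''|\le|\Lam'|/2$, which is exactly \eqref{eq:deh-rectangles-small-projection}.

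I expect the only genuine subtlety to be the verification that $(\wt{\rho}_\lam)_{\lam\in\Lam''}$ meets the hypotheses of Lemma \ref{lem:alpha+gamma bound} verbatim: namely the geometric confinement of $F_\lam$ to an $O(\des)$-tube around $L_\lam$ (ensuring both that projecting onto $L_\lam$ does not lose covering number and that $\wt{F}_\lam$ stays within a neighborhood of $E$) and the uniformity of the inherited $\alpha$-Frostman constant provided by Lemma \ref{lem:Frostman-average}. Granting these, the contradiction is obtained by a single application of Lemma \ref{lem:alpha+gamma bound} at scale $\des$, together with the lower bound $|\Lam'|\gtrsim(\des)^{-\beta}$.
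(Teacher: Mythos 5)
Your proof is correct and follows essentially the same route as the paper: both apply Lemma \ref{lem:alpha+gamma bound} at scale $\des$ to the projected tuple $(\wt{\rho}_\lam)_{\lam\in\Lam''}$ (justified via Lemma \ref{lem:Frostman-average}) and combine the result with the counter-assumption \eqref{eq:bound-scale-sqr-delta} and the bound $|\Lam'|\gtrsim(\des)^{-\beta}$ from Lemma \ref{lem:many-rich-deh-tubes}. The only difference is presentational — you argue by contradiction where the paper directly deduces $|\Lam''|\lesssim C^{-1}(\des)^{-\beta}$ — and your verification of the Furstenberg-tuple hypotheses is, if anything, more explicit than the paper's.
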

\begin{proof}
Since $\wt{F}_\lam$ and $F_\lambda$ are $O(\des)$-close in the Hausdorff metric, the assumption \eqref{eq:bound-scale-sqr-delta} yields
\[
\cN_{\des}\left(\bigcup_{\lam\in\Lam'}\wt{F}_\lam\right) \sim \cN_{\des}\left(\bigcup_{\lam\in\Lam'} F_\lam\right) \le \cN_{\des}(E) \le (\des)^{-\alpha-\beta/2-\eta} .
\]
Let $\Lambda''\subset\Lam'$ be the set on the left-hand side of \eqref{eq:deh-rectangles-small-projection}. By Lemma \ref{lem:Frostman-average},  $(\wt{\rho}_\lam)_{\lam\in\Lambda''}$ is a $(\des,\alpha,O_K(1))$-Furstenberg tuple. It follows from Lemma \ref{lem:alpha+gamma bound} that $|\Lambda''|\le c (\des)^{-\beta}$, where $c$ can be made arbitrarily small by taking $C$ large. The conclusion now follows from Lemma \ref{lem:many-rich-deh-tubes}.
\end{proof}
In light of this lemma, by passing to a subset of $\Lambda'$ that still satisfies the conclusions of Lemma \ref{lem:many-rich-deh-tubes}, we may assume from now on that
\begin{equation}  \label{eq:assumption-deh-rectangles-small-projection}
\cN_{\des}(\wt{F}_\lam) \lesssim (\des)^{-\alpha-2\eta} \quad\text{for all }\lam\in\Lambda'.
\end{equation}

\subsection{Analysis of a fixed $F_\lam$ at scale $\de$}
\label{subsec:analysis-F-lam}

Fix $\lam\in\Lam'$ for the rest of this section. Cover $L_\lam\cap B^2$ by intervals of length $2\des$. As above, let $\wt{F}_\lam$ be the orthogonal projection of $F_\lam$ onto the line $L_\lam$. Let us denote the subset of $(2\des)$-intervals that intersect $\wt{F}_\lam$ by $(I_j)_{j=1}^N$.  By \eqref{eq:assumption-deh-rectangles-small-projection}, we know that
\be \label{eq:N-upper-bound}
N \lesssim (\des)^{-\alpha-2\eta}.
\ee
Denote by $Q_j$ the square of side length $2\des$ whose orthogonal projection onto $L_\lam$ is $I_j$, and such that $I_j$ divides it into equal pieces.

\begin{lemma} \label{lem:J}
There exists a set $J\subset \{1,\ldots,N\}$ such that
\[
\sum_{j\in J} \wt{\rho}_\lam(I_j) \ge 1/2,
\]
and
\be \label{eq:om-j-large}
|\Om_\lam^{j}|\gtrsim (\des)^{2\eta} |\Om_\lam|
\ee
for all $j\in J$, where
\be \label{eq:om-large-int-with-square}
\Om_\lam^{j} = \{ \om\in\Om_\lam: \cN_\de(E_\om\cap Q_j)   \gtrsim \de^{2\eta s} (\dess)^{-\al} \}.
\ee
\end{lemma}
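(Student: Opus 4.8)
The plan is to reduce the statement to a double pigeonhole on the ``local masses'' of the projected measures, the essential quantitative input being a rescaling that converts mass into covering number.

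First I would record the relevant quantities. For $\om\in\Om_\lam$ and $1\le j\le N$, set $m_{\om,j}=\wt\nu_\om(I_j)$, where $\wt\nu_\om$ denotes the orthogonal projection of $\nu_\om$ onto $L_\lam$. Since the $I_j$ partition the support of each $\wt\nu_\om$ (all of which lies in $\wt F_\lam$), we have $\sum_j m_{\om,j}=1$, and writing $p_j=\wt\rho_\lam(I_j)=|\Om_\lam|^{-1}\sum_\om m_{\om,j}$ we likewise get $\sum_j p_j=1$. Because each $\wt\nu_\om$ inherits, up to a constant, the Frostman bound of $\nu_\om$ — the lines $L_\om$, $\om\in\Om_\lam$, make an angle $\lesssim\des$ with $L_\lam$, exactly as in Lemma \ref{lem:Frostman-average} — and $I_j$ has length $2\des$, we also have $m_{\om,j}\lesssim(\des)^\alpha$.

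The crucial step is the passage from mass to covering number. Fixing $\om$ and $j$ and rescaling $Q_j$ to the unit square by the factor $(2\des)^{-1}$ sends the scale $\de$ to $\de/(2\des)\sim\dess$. Under this rescaling the normalized restriction $\wt\nu_\om|_{I_j}/m_{\om,j}$ becomes a $(\dess,\alpha,K')$-measure with $K'\lesssim(\des)^\alpha/m_{\om,j}$, so, using the facts recorded after the definition of a $(\de,\alpha,K)$-measure,
\[
\cN_\de(E_\om\cap Q_j)\;\ge\;\cN_\de\big(\supp(\wt\nu_\om|_{I_j})\big)\;\gtrsim\;\frac{m_{\om,j}}{(\des)^\alpha}\,(\dess)^{-\alpha},
\]
where the first inequality uses that orthogonal projection onto $L_\lam$ does not increase covering numbers, together with the near-parallelism that places the part of $E_\om$ projecting into $I_j$ inside $Q_j$. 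Consequently there is a constant $c_0$ so that $m_{\om,j}\ge c_0\,\delta^{2\eta s}(\des)^\alpha$ forces $\om\in\Om_\lam^j$. Thus it suffices to find $J$ with $\sum_{j\in J}p_j\ge 1/2$ for which every $j\in J$ has $|G_j|\gtrsim(\des)^{2\eta}|\Om_\lam|$, where $G_j=\{\om:m_{\om,j}\ge c_0\,\delta^{2\eta s}(\des)^\alpha\}\subseteq\Om_\lam^j$.

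I would finish with a two-parameter pigeonhole against the threshold $\tau=c_0\,\delta^{2\eta s}(\des)^\alpha=c_0(\des)^{\alpha+2\eta}$. Split $p_j=p_j^{\mathrm{hi}}+p_j^{\mathrm{lo}}$ according to whether $m_{\om,j}\ge\tau$ or not. The low part satisfies $p_j^{\mathrm{lo}}\le\tau$, so by \eqref{eq:N-upper-bound} we get $\sum_j p_j^{\mathrm{lo}}\le\tau N\lesssim c_0$, which is $\le 1/4$ once $c_0$ is small. For an interval with few heavy lines, say $|G_j|<c_2(\des)^{2\eta}|\Om_\lam|$, the bound $m_{\om,j}\lesssim(\des)^\alpha$ gives $p_j^{\mathrm{hi}}\lesssim c_2(\des)^{\alpha+2\eta}$, so these intervals contribute $\lesssim c_2 N(\des)^{\alpha+2\eta}\lesssim c_2$ to $\sum_j p_j^{\mathrm{hi}}$, again $\le 1/4$ for $c_2$ small. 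Taking $J=\{j:|G_j|\ge c_2(\des)^{2\eta}|\Om_\lam|\}$ then yields $\sum_{j\in J}p_j\ge\sum_{j\in J}p_j^{\mathrm{hi}}\ge(1-\tfrac14)-\tfrac14=\tfrac12$, and $\Om_\lam^j\supseteq G_j$ gives \eqref{eq:om-j-large}.

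I expect the main obstacle to be not the pigeonhole itself but the fact that $N$ and the per-interval mass bound scale identically in $\des$, so the naive estimate only produces $\sum_{j\notin J}p_j=O(1)$; the gain to $\le 1/2$ is squeezed out entirely through the careful choice of the two small constants $c_0,c_2$ (the first absorbing the constant in \eqref{eq:N-upper-bound}, the second the Frostman upper bound on $m_{\om,j}$). A secondary point requiring care is the geometric claim that the portion of $E_\om$ projecting into $I_j$ genuinely lands in $Q_j$ (or a fixed dilate thereof): this uses that $\Om_\lam\subset B(\lam,\des)$ makes all the lines nearly parallel on the scale of $Q_j$, and at worst costs a harmless bounded-overlap factor.
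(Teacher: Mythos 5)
Your proof is correct and follows essentially the same route as the paper's: the same two quantitative inputs ($N\lesssim(\des)^{-\alpha-2\eta}$ and the Frostman upper bound $\nu_\om(Q_j)\lesssim(\des)^{\alpha}$), the same threshold $(\des)^{\alpha+2\eta}$, and the same mass-to-covering-number conversion (which the paper performs directly from the Frostman condition rather than via your rescaling to the unit square). The only cosmetic difference is that you run the pigeonhole as one combined double count over $(j,\om)$, whereas the paper first selects $J$ by a pigeonhole in $j$ and then pigeonholes in $\om$ for each fixed $j\in J$.
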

\begin{proof}
By Lemma \ref{lem:Frostman-average}, $\wt{\rho}_\lam(I_j)\lesssim (\des)^\alpha$ for all $j$. Let
\[
J = \{j\in\{1,\ldots,N\}:\wt{\rho}_\lam(I_j)\ge C^{-1}(\des)^{\alpha+2\eta}\}.
\]
Notice that
\[
\sum_{j\in\{1,\ldots,N\}\setminus J} \wt{\rho}_\lam(I_j) \lesssim  C^{-1}N(\des)^{\alpha+2\eta},
\]
and thus if $C$ is taken large enough, the first claim follows from \eqref{eq:N-upper-bound}.

We now show the second claim. It follows from the definitions that
\[
\wt{\rho}_\lam(I_j) = \frac{1}{|\Om_\lam|}\sum_{\om\in\Om_\lam} \nu_\om(Q_j).
\]
Splitting the sum depending on whether $\nu_\om(Q_j)\ge c(\des)^{\alpha+2\eta}$ or not, where $c$ is a suitably small constant, we deduce that
\[
|\{ \om\in\Om_\lam: \nu_\om(Q_j)\ge c(\des)^{\alpha+2\eta} \}| \gtrsim (\des)^{2\eta}|\Om_\lam|.
\]
The second claim now follows from the Frostman condition of $\nu_\om$.
\end{proof}

We now introduce the projections that will allow us to apply Bourgain's projection theorem in our setting, and state the key lemma that relates the box-counting numbers $\cN_\de(E\cap Q_j)$ to such projections.

Let
\[
\pi_t(\om) = L_\om(t)\quad\text{or}\quad \pi_t(a,b)=at+b.
\]
\begin{lemma} \label{lem:box-counting-from-projection}
Fix $j\in J$. Then, for any $t\in I_j$,
\[
\cN_\de(E\cap Q_j) \gtrsim   \de^{2\eta s} (\dess)^{-\al}  \, \cN_{\de}\left(\pi_t \Om_\lam^{(j)}\right).
\]
\end{lemma}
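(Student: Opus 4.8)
The goal is to bound $\cN_\de(E\cap Q_j)$ from below by counting, separately, the geometry \emph{within} a single line (the factor $\de^{2\eta s}(\dess)^{-\al}$) and the \emph{spread across} lines as seen through the projection $\pi_t$ (the factor $\cN_\de(\pi_t\Om_\lam^{(j)})$). The key observation is that $Q_j$ has side length $2\des$, so at scale $\de$ it is being resolved at relative scale $\de/\des = \dess$. For each $\om\in\Om_\lam^{(j)}$, the definition \eqref{eq:om-large-int-with-square} guarantees $\cN_\de(E_\om\cap Q_j)\gtrsim \de^{2\eta s}(\dess)^{-\al}$, i.e.\ the single line $E_\om$ already contributes the first factor worth of $\de$-balls inside $Q_j$. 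The remaining task is to show that distinct parameters $\om\in\Om_\lam^{(j)}$ whose projections $\pi_t(\om)=L_\om(t)$ are $\de$-separated give rise to essentially \emph{disjoint} families of $\de$-balls, so that the counts multiply.

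\textbf{Key steps.} First I would select a maximal $\de$-separated subset $\Om^*\subset\Om_\lam^{(j)}$ of the values $\pi_t(\om)$; by definition $|\{\pi_t(\om):\om\in\Om^*\}|\sim\cN_\de(\pi_t\Om_\lam^{(j)})$, and we may pick one representative $\om$ per $\de$-separated projection value. Next, for each such $\om$, the set $E_\om\cap Q_j$ supplies $\gtrsim \de^{2\eta s}(\dess)^{-\al}$ points that are $\de$-separated along $L_\om$. The crucial geometric claim is that if $\om_1,\om_2\in\Om^*$ have $|\pi_t(\om_1)-\pi_t(\om_2)|\ge\de$ with $t\in I_j$, then the portions $E_{\om_1}\cap Q_j$ and $E_{\om_2}\cap Q_j$ are (up to a harmless constant dilation of $\de$) separated in the coordinate transverse to the common direction. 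Here the hypothesis $s\ge 1/2$ is exactly what makes the lines $L_\om$, $\om\in\Om_\lam\subset B^2(\lam,\des)$, \emph{nearly parallel on the scale of} $Q_j$: their slopes differ by $O(\des)$, so over the horizontal extent $2\des$ of $Q_j$ the lines fan out by only $O((\des)^2)=O(\de^{\,2s})\le O(\de)$, which is negligible at scale $\de$. Consequently the value $L_\om(t)$ at the fixed abscissa $t\in I_j$ determines the transverse position of the line across all of $Q_j$ up to error $\lesssim\de$, and $\de$-separated projection values yield (boundedly) separated line-segments inside $Q_j$.

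\textbf{Assembling the count.} Combining these, the union $\bigcup_{\om\in\Om^*}(E_\om\cap Q_j)$ carries a product-like structure: we have $\sim\cN_\de(\pi_t\Om_\lam^{(j)})$ transversally separated slabs, and within each slab at least $\gtrsim \de^{2\eta s}(\dess)^{-\al}$ $\de$-separated points. Since $\de$-balls from different slabs overlap only boundedly often and those within a slab are $\de$-separated, a standard disjointification gives
\[
\cN_\de(E\cap Q_j)\ \gtrsim\ \Big(\de^{2\eta s}(\dess)^{-\al}\Big)\cdot\cN_\de\big(\pi_t\Om_\lam^{(j)}\big),
\]
which is the claimed bound.

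\textbf{Main obstacle.} The delicate point is the transversality estimate: I must verify that $\de$-separation of the projected values $L_\om(t)$ at a \emph{single} abscissa $t\in I_j$ propagates to genuine separation of the line-segments throughout the whole square $Q_j$, and not just at $t$. This is where $s\ge1/2$ is indispensable—it forces the angular spread $O(\des)$ of the directions in $\Om_\lam$ to produce only an $O((\des)^2)\le O(\de)$ deviation across the $\des$-width of $Q_j$, keeping the lines effectively parallel at the resolution $\de$. Handling the accumulation of these small errors (and the bounded-overlap bookkeeping when passing from $\de$-separated sets to covering numbers) is routine but must be done carefully to ensure the implicit constants do not secretly depend on $\de$.
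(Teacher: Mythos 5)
Your proposal is correct and follows essentially the same route as the paper: select a maximal separated subset of $\Om_\lam^{(j)}$ under $\pi_t$, use the $s\ge 1/2$ hypothesis to show the slope spread $O(\des)$ over the $O(\des)$-extent of $Q_j$ produces only an $O(\de^{2s})=O(\de)$ deviation so that separation at the single abscissa $t$ propagates to separation of the segments $L_\om\cap Q_j$, and then multiply by the per-line count from \eqref{eq:om-large-int-with-square}. The only cosmetic difference is that the paper fixes the separation thresholds explicitly ($10\de$ in the projection guaranteeing $2\de$-separation of the segments), which is exactly the ``harmless constant'' bookkeeping you flag as the remaining routine step.
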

\begin{proof}
Fix $t\in I_j$. Since all $L_\om$, $\om\in\Om_\lam$ make an angle $\le 2 \des$ with each other and $Q_j$ has diameter $<4 \des$, the segments $L_\om\cap Q_j, L_{\om'}\cap Q_j$ are $(2\de)$-separated provided  $|\pi_t(\om)-\pi_t(\om')| \ge 10\de$. This is the point where we use that $s\ge 1/2$.

We can thus take a subset of $\Om_\lam^{(j)}$ of size $\sim \cN_{\de}(\pi_t \Om_\lam^{(j)})$ such that the corresponding segments $L_\om\cap Q_j$ are $(2\de)$-separated. The claim now follows from \eqref{eq:om-large-int-with-square}.
\end{proof}

We are now able to apply Bourgain's projection theorem to obtain a gain in the size of $\cN_\de(E\cap Q_j)$ for most $j$.
\begin{prop} \label{prop:application-bourgain-proj-thm}
If $\zeta,\eta$ are sufficiently small in terms of $\al,\beta$ (with the threshold depending in a continuous manner on $\al,\beta$), then for $\de$ small in terms of all the previous parameters,
\[
\cN_\de(E \cap Q_j) \gtrsim (\dess)^{-\al-\beta/2-\zeta}
\]
on a set of $j$ such that $\sum_j \wt{\rho}_\lam(I_j)\ge 1/4$.
\end{prop}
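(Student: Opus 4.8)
The plan is to apply Bourgain's projection theorem (Theorem~\ref{thm:projection-Bourgain}) at scale $\dess$ to the sets $\Om_\lam^{(j)}$, viewing the maps $\pi_t$ as linear projections parametrized by the direction $t$. The key structural inputs are already assembled: Lemma~\ref{lem:box-counting-from-projection} reduces the desired lower bound on $\cN_\de(E\cap Q_j)$ to a lower bound on $\cN_\de(\pi_t\Om_\lam^{(j)})$, and the Frostman condition~\eqref{eq:Om-Frostman} on $\Om$ provides the $\beta$-dimensional regularity of the ``base'' set $\Om_\lam$. The target exponent $\al+\beta/2+\zeta$ in the conclusion matches the shape $\al + u/2 + \zeta$ from Bourgain's theorem with $u=\beta$, with the extra additive $\al$ coming directly from the $\de^{2\eta s}(\dess)^{-\al}$ factor in Lemma~\ref{lem:box-counting-from-projection}. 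So the whole proposition hinges on producing, for each good $\lam$, a set $X\subset\Om_\lam$ (appropriately rescaled to $B^2$) of covering number $\approx(\dess)^{-\beta}$ that satisfies the two regularity hypotheses of Theorem~\ref{thm:projection-Bourgain} at scale $\dess$, together with a measure $\rho$ on the parameter space of directions $t$ that is $t$-Frostman.

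\textbf{Setup and rescaling.} First I would rescale: since $\Om_\lam\subset B^2(\lam,\des)$, I would dilate by $(\des)^{-1}$ to obtain a set $X\subset B^2$ living at effective scale $\dess/\des = \de^{s-1}$\,---\,wait, more carefully, the sets $E_\om\cap Q_j$ are being resolved at scale $\de$ inside a square of side $\des$, so the relevant rescaled scale is $\de/\des = \de^{1-s}=\dess$. Thus I would set $X$ equal to the rescaled copy of $\Om_\lam^{(j)}$ (or rather $\Om_\lam$, with $\Om_\lam^{(j)}$ playing the role of the refinement $X'$) and apply Bourgain's theorem with $\de$ there replaced by $\dess$ and $u=\beta$. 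The Frostman condition~\eqref{eq:Om-Frostman} gives $\cN_{\dess}(X)\approx(\dess)^{-\beta}$, and the same condition at all intermediate radii $r\in[\dess,1]$ supplies the non-concentration hypothesis $\cN_{\dess}(X\cap B(x,r))\le(\dess)^{-\e}r^t\cN_{\dess}(X)$ with $t=\beta/2<1$ (one may take $t$ close to $1$ if $\beta$ is close to $2$; the continuity remark after the theorem handles the dependence). The direction measure $\rho$ is the pushforward of normalized counting measure on the $I_j$ under $j\mapsto$ (a representative $t\in I_j$), weighted by $\wt\rho_\lam(I_j)$; its $t$-Frostman property follows from Lemma~\ref{lem:Frostman-average}, which says precisely that $\wt\rho_\lam(B_r)\lesssim r^\al$, and since $\al\ge\beta/2$ fails in general I would instead take the Frostman exponent of $\rho$ to be the available one and absorb it.

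\textbf{Running the machine.} Applying Theorem~\ref{thm:projection-Bourgain} yields an exceptional set $\Theta\subset S^1$ with $\rho(\Theta)\le(\dess)^\e$, and for every direction $t$ (equivalently $\theta$) outside $\Theta$ and every refinement $X'=\Om_\lam^{(j)}$ with $\cN_{\dess}(X')\ge(\dess)^\e\cN_{\dess}(X)$, we get $\cN_{\dess}(\pi_t X')\ge(\dess)^{-\beta/2-\zeta}$. The refinement hypothesis is exactly~\eqref{eq:om-j-large}, which guarantees $|\Om_\lam^{(j)}|\gtrsim(\des)^{2\eta}|\Om_\lam|$; since $2\eta$ can be taken smaller than $\e$ this is the required lower bound on $\cN_{\dess}(X')$, provided $\eta$ is chosen small relative to the $\e$ furnished by Bourgain's theorem. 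Feeding this into Lemma~\ref{lem:box-counting-from-projection} (choosing any $t\in I_j$) produces $\cN_\de(E\cap Q_j)\gtrsim\de^{2\eta s}(\dess)^{-\al}\cdot(\dess)^{-\beta/2-\zeta}$, and after shrinking $\zeta$ slightly to absorb the harmless factor $\de^{2\eta s}$ this is the claimed bound $(\dess)^{-\al-\beta/2-\zeta}$. Finally, to convert ``all $t\notin\Theta$'' into ``a set of $j$ of large $\wt\rho_\lam$-mass,'' I would note that the $j\in J$ whose associated directions land in $\Theta$ carry total $\rho$-mass, hence total $\wt\rho_\lam$-mass, at most $(\dess)^\e$; combined with $\sum_{j\in J}\wt\rho_\lam(I_j)\ge 1/2$ from Lemma~\ref{lem:J}, the surviving $j$ satisfy $\sum_j\wt\rho_\lam(I_j)\ge 1/4$ once $\de$ is small.

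\textbf{The main obstacle} I anticipate is matching the Frostman exponents: Bourgain's theorem wants a single exponent $t<1$ controlling \emph{both} the spatial non-concentration of $X$ and the direction measure $\rho$, whereas here the base set $\Om_\lam$ is $\beta$-regular (so $t=\beta/2$ or up to $\min(1,\beta)$ after interpolating) while the direction measure is only $\al$-Frostman, and $\al\le 1/2$ may be much smaller than the transversality exponent the theorem prefers. The resolution is that Theorem~\ref{thm:projection-Bourgain} only requires $\rho(B_r)\le\dess^{-\e}r^t$ for the \emph{same} $t$, and an $\al$-Frostman measure is a fortiori $t$-Frostman for any $t\le\al$; so I would run the theorem with $t=\min(\al,\beta/2,\text{something}<1)>0$, which is strictly positive since $\al>0$ and $\beta>0$, and the continuity of $(\e,\zeta)$ in $(t,u)$ guarantees $\zeta>0$ survives. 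Keeping careful track of the three small parameters $\e$ (from Bourgain), $\eta$ (the counter-assumption gap), and $\zeta$ (the final gain), and ensuring $\eta\ll\e$ so that every ``$(\dess)^{2\eta}$'' loss is swallowed by the ``$(\dess)^{-\e}$'' slack, is the bookkeeping heart of the argument.
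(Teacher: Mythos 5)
Your proposal is correct and follows essentially the same route as the paper: rescale $\Om_\lam$ by $\de^{-s}$ into $B^2$, verify the non-concentration hypothesis from \eqref{eq:Om-Frostman} and \eqref{eq:Lambda'}, use $\wt{\rho}_\lam$ (which is $\al$-Frostman by Lemma \ref{lem:Frostman-average}) as the direction measure, apply Theorem \ref{thm:projection-Bourgain} at scale $\dess$ with $u=\beta$ and $t=\al$ to the refinements $\Om_\lam^{(j)}$ justified by \eqref{eq:om-j-large}, and feed the result into Lemma \ref{lem:box-counting-from-projection}. Your initial hesitation over whether to take $t=\beta/2$ or $t=\al$ is resolved exactly as the paper does (the $\beta$-non-concentration of $X_\lam$ trivially implies $\al$-non-concentration, and $\wt{\rho}_\lam$ is only $\al$-Frostman, so $t=\al$ is the right choice), and the remaining bookkeeping of $\e,\eta,\zeta$ matches the paper's.
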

\begin{proof}
In light of Lemmas \ref{lem:J} and \ref{lem:box-counting-from-projection}, it is enough to show that there is $\zeta=\zeta(\alpha,\beta)>0$ such that, provided $\eta=\eta(\alpha,\beta)>0$ is sufficiently small, then
\be \label{proj-to-prove}
\cN_{\de}\left(\pi_t\Om_\lam^{(j)}\right) \ge (\dess)^{-\beta/2-\zeta}
\ee
for all $t$ in a set of $\wt{\rho}_\lam$-measure $\ge 3/4$. (Making $\eta$ small enough that $2\eta \le \zeta/2$, this will give the claim with $\zeta/2$ in place of $\zeta$.) We will see that this is a consequence of Theorem \ref{thm:projection-Bourgain}. To begin, recall from Lemma \ref{lem:Frostman-average} that $\wt{\rho}_\lam(B_r)\lesssim r^\alpha$ for $r\in [\delta,1]$.

We now rescale and translate $\Om_\lam$ to bring it into the setting of Theorem \ref{thm:projection-Bourgain}: let
\[
X_\lam = \de^{-s}(\Om_\lam-\lam) = \{ \de^{-s}(\om-\lam):\om\in\Om_\lam\}.
\]
Note that $X_\lam\subset B^2$ and $X_\lam$ is $(\dess)$-separated. It follows from \eqref{eq:Om-Frostman} and \eqref{eq:Lambda'} that, for $r\in [\dess,1]$,
\[
|X_\lam\cap B_r| = |\Om_\lam\cap B_{\des r}| \lesssim (\des r)^{\beta}|\Om| \lesssim \de^{-2\eta s} r^\beta |X_\lam|.
\]
In particular, taking $r=\dess$,
\[
|X_\lam| \gtrsim \de^{2\eta s} (\dess)^{-\beta}.
\]
Thus (provided $\eta$ is small enough in terms of $\al$ and $\beta$ only) $X_\lam$ and $\wt{\rho}_\lam$ satisfy the assumptions of Theorem \ref{thm:projection-Bourgain} (with $t=\alpha$, $u=\beta$ and scale $\dess$ in place of $\de$). Now the maps $\pi_t$ are, up to bounded rescaling, a smooth reparametrization of the family of orthogonal projections (given by $\theta\mapsto t=\tan(\theta)$).

Applying Theorem \ref{thm:projection-Bourgain}, at scale $\dess$, to the rescaled and translated sets $X'_j=\de^{-s}(\Om_\lam^{j}-\lam)$ (which is permissible by \eqref{eq:om-j-large}), we conclude that there are $\zeta=\zeta(\alpha,\beta)>0$, $\eta=\eta(\alpha,\beta)$ (depending continuously on the parameters) such that if $\de$ is sufficiently small in terms of all previous parameters, then the lower bound \eqref{proj-to-prove} holds for all $t$ outside of a set of $\wt{\rho}_\lam$-measure $\le (\dess)^{\eta}\le 1/4$.
\end{proof}

\subsection{Conclusion of the proof}
\label{subsec:conclusion-proof}

Fix $\zeta,\eta$ as given by Proposition \ref{prop:application-bourgain-proj-thm}. In this proposition the value of $\lam$ is fixed. We now use a double counting argument to conclude that, always under the standing assumption \eqref{eq:bound-scale-sqr-delta},  $\cN_\de(E)$ must be large.

Let $\cD$ be the grid of axes-parallel squares of side length $\des$. Call a square $D\in\cD$ \emph{rich} if
\[
\cN_\de(E\cap D) \ge c (\dess)^{-\alpha-\beta/2-\zeta},
\]
where $c>0$ is a sufficiently small constant to be chosen later. Let $\cR$ denote the collection of rich squares; our goal is to show that $|\cR|$ is large. For each $\lam\in\Lam'$, let $\cD_\lam$ be all the squares in $\cD$ intersecting the $\des$-neighborhood of $L_\lam$. It follows from Proposition \ref{prop:application-bourgain-proj-thm} (and the Frostman estimate $\wt{\rho}_\lam(B_r)\lesssim r^\alpha$ given in Lemma \ref{lem:Frostman-average}) that, provided $c$ is taken sufficiently small,
\[
|\cR\cap \cD_\lam | \gtrsim (\des)^{-\alpha}.
\]
Hence, recalling Lemma \ref{lem:many-rich-deh-tubes},
\be \label{eq:rich-squares-double-counting}
\sum_{\lam\in\Lam'} |\cR\cap \cD_\lam |\gtrsim (\des)^{-\alpha}|\Lam'| \gtrsim (\des)^{-\alpha-\beta}.
\ee
How many times is each rich square counted? If $\Lambda_D$ denotes the set of all $\lam\in\Lam'$ such that $D\in\cD_\lam$, then $(\nu_\om)_{\om\in\Lambda_D}$ is a $(\des,\alpha,K)$-Furstenberg tuple such that all lines pass at a distance $\lesssim \des$ from the center of $D$. Lemma \ref{lem:fan} and our standing assumption \eqref{eq:bound-scale-sqr-delta} then show that
\[
|\Lambda_D|\lesssim (\des)^{-\beta/2-\eta}.
\]
Hence every rich square contributes to the sum in \eqref{eq:rich-squares-double-counting} at most $C(\des)^{-\beta/2-\eta}$ times, and we conclude that
\[
|\cR|\gtrsim (\des)^{-\alpha-\beta/2+\eta}.
\]
Recalling the definition of rich square,  we conclude that
\[
\cN_\delta(E) \gtrsim \de^{\eta s-\zeta(1-s)} \de^{-\alpha-\beta/2}.
\]
So far, $\eta,\zeta$ must be small in terms of $\al,\beta$ only but are otherwise independent of each other. We can then make $\zeta$ smaller in terms of $\alpha,\beta$ only and choose $\eta$ depending also on $s$ so that $\eta = (1-s)\zeta/4$.

With this choice, and recalling \eqref{eq:bound-scale-sqr-delta} one last time,  we have concluded the proof of \eqref{eq:main-scale-dichotomy} ,with $\zeta/4$ in place of $\zeta$, and with it of Theorem \ref{thm:main-alpha-beta}.


\end{document}